\documentclass{amsart}
\usepackage[all]{xy}
\usepackage{graphicx}
\usepackage{amsmath}
\usepackage{amsfonts}
\usepackage{amssymb}

\usepackage[left=3.6cm, right=3.6cm]{geometry}

\newcommand{\qi}{\mathbf{i}}
\newcommand{\qj}{\mathbf{j}}
\newcommand{\qk}{\mathbf{k}}
\newcommand{\qzero}{\mathbf{0}}
\newcommand{\id}{\mathrm{id}}

\newtheorem{theorem}{Theorem}[section]

\newtheorem{corollary}[theorem]{Corollary}

\newtheorem{proposition}[theorem]{Proposition}

\theoremstyle{definition}
\newtheorem{definition}[theorem]{Definition}

\theoremstyle{remark}
\theoremstyle{remark}
\newtheorem{remark}[theorem]{Remark}

\numberwithin{equation}{section}

\begin{document}
\title[Topology of $3$-cosymplectic manifolds]{Topology of $3$-cosymplectic manifolds}

\author[B. Cappelletti Montano]{Beniamino Cappelletti Montano}
 \address{Dipartimento di Matematica e Informatica, Universit\`a degli Studi di
 Cagliari, Via Ospedale 72, 09124 Cagliari, Italy}
 \email{b.cappellettimontano@gmail.com}

\author[A. De Nicola]{Antonio De Nicola}
 \address{CMUC, Department of Mathematics, University of Coimbra, 3001-454 Coimbra, Portugal}
 \email{antondenicola@gmail.com}

\author[I. Yudin]{Ivan Yudin}
 \address{CMUC, Department of Mathematics, University of Coimbra, 3001-454 Coimbra, Portugal}
 \email{yudin@mat.uc.pt}

\subjclass[2000]{Primary 53C12, Secondary 53C25, 57R30}

\keywords{3-cosymplectic, cosymplectic, 3-structure,
hyper-K\"{a}hler, Betti numbers, cohomology}

\thanks{The first author was partially supported by a research grant from Regione
Puglia. The second author is partially supported by the FCT grant
PTDC/MAT/099880/2008 and by MICIN (Spain) grant MTM2009-13383. The
last author is supported by the FCT Grant SFRH/BPD/31788/2006. All
the authors want to thank CMUC for hospitality and support.}

\begin{abstract}
We continue the program of Chinea, De Le\'on and Marrero who studied
the topology of cosymplectic manifolds. We study 3-cosymplectic
manifolds which are the closest odd-dimensional analogue of
hyper-K\"{a}hler structures. We  show that there is an action of the
Lie algebra $so(4,1)$ on the basic cohomology spaces of a compact
3-cosymplectic manifold with respect to the Reeb foliation. This
implies some topological obstructions to the existence of such
structures which are expressed by bounds on the Betti numbers. It is
known that every 3-cosymplectic manifold is  a local Riemannian
product of a hyper-K\"ahler factor and an abelian three dimensional
Lie group. Nevertheless, we present a nontrivial example of compact
3-cosymplectic manifold which is not the global product of a
hyper-K\"{a}hler manifold and a flat 3-torus.

\end{abstract}

\maketitle

\section{Introduction}
Cosymplectic geometry is considered to be the closest
odd-dimensional analogue of K\"{a}hler geometry (see e.g.
\cite[Section~6.5]{blairbook},
\cite[Section~14.5]{dragomirorneabook}). This becomes even more
evident when one passes to the setting of 3-structures. Indeed,
while both cosymplectic and Sasakian manifolds admit a transversal
K\"{a}hler structure, only 3-cosymplectic manifolds admit a
transversal hyper-K\"{a}hler structure (cf.
\cite{cappellettidenicola}).

In the fundamental paper \cite{chineamarrero}, Chinea, De Le\'on and
Marrero studied the topology of cosymplectic manifolds, refining the
previous results of Blair and Goldberg (\cite{blairgoldberg}). They
proved a monotonicity result for the Betti numbers of a compact
cosymplectic manifold $M^{2n+1}$ up to the middle dimension. Next,
the differences $b_{2p+1}-b_{2p}$ (with $0\leq p\leq n$) were shown
to be even integers (in particular, $b_{1}$ is odd). Moreover, they
found an example of a compact cosymplectic manifold which is not the
global product of a K\"{a}hler manifold and the circle. Later on,
other nontrivial examples were provided (cf.
\cite{marreropadron,FinoVezzoni}). More recently, Li (\cite{Li})
gave an alternative proof of the monotonicity property of the Betti
numbers of  cosymplectic manifolds (which he prefers to call
co-K\"{a}hler) by using topological techniques.

A 3-cosymplectic manifold (see e.g.
\cite[Section~13.1]{galickibook}) is a smooth manifold $M$ of
dimension $ 4n+3$ endowed with an almost contact metric 3-structure
such that each structure is cosymplectic. This class of Riemannian
manifolds is contained in the wider class of 3-quasi Sasakian
manifolds. Every 3-cosymplectic manifold is in particular
cosymplectic hence all the previously mentioned results still hold.
A natural problem is whether the quaternionic-like conditions which
relate the structure tensors of 3-cosymplectic manifolds can induce
additional rigidity to the underlying topological structure. The aim
of this paper is to give an answer to this question.

Every $3$-cosymplectic manifold $M$ admits the canonical Reeb
foliation $\mathcal{F}_3$ of dimension three. We denote by
$H_B^*\left( M \right)$ the basic cohomology with respect to this
foliation. The first result we prove in Section~\ref{decomposition}
can be restated in the form
\begin{equation}
    \label{kueneth}
    H_{dR}^*\left( M \right) \cong H^*_B\left( M \right) \otimes H_{dR}^*\left(
    \mathbb{T}^3 \right)
\end{equation}
    for any compact $3$-cosymplectic manifold $M$.
This shows that the Betti numbers of $M$ are completely determined by the basic
Betti numbers $b^h_p := \dim H^p_B\left( M \right)$, namely
\begin{equation}\label{formulamagica}
b_p=b_p^h+3b_{p-1}^h+3b_{p-2}^h+b_{p-3}^h.
\end{equation}

When $\mathcal{F}_3$ is a regular foliation we can identify
$H^*_B\left( M \right)$ with $H_{dR}^*\left( M/\mathcal{F}_3
\right)$. In this case $M/\mathcal{F}_3$ is a hyper-K\"ahler
manifold. There are known several results which give restrictions on
possible values of Betti numbers of compact  hyper-K\"ahler
manifolds. This suggests to look for the similar results about
$H^*_B\left( M \right)$. The results  on Betti numbers of compact
hyper-K\"ahler manifolds can be divided into two families. In one
family there are results that can be obtained from the existence of
the $so\left( 4,1 \right)$ action on the cohomology ring of a
hyper-K\"ahler manifold discovered by Verbitsky in \cite{verbitski}.
In the other family there are the equations derived from the
Riemann-Roch theorem by Salamon in~\cite{salamon}. There is no hope
at the moment to get an extension of the Salamon's result for
$H^*_B\left( M \right)$ for a case when $\mathcal{F}_3$ is
non-regular, as the theory of transversally hyper-K\"ahler
foliations is not developed enough.

In Section~\ref{action} we show the existence of an $so\left( 4,1 \right)$
action on $H^*_B\left( M \right)$. From representation theory of $so\left( 4,1
\right)$ it follows that the basic Betti numbers $b^h_{2p+1}$ are divisible by
four, and that
$$
b^h_{2p} \ge \binom{p+2}{2}, \ 0\le p\le n.
$$
We show these results in Section~\ref{quaternions} and
Section~\ref{inequalities} by more elementary arguments to make the article
accessible to a wider audience. As consequences, we will obtain that for a compact
$3$-cosymplectic manifold
$b_{2p} + b_{2p+1}$ are multiples of four and that
\[
b_{p} \geq \binom{p+2}{2}\qquad\quad \mbox{for }0\leq p\leq 2n+1.
\]

From the above considerations one can see that there are strong
obstructions to the existence of compact 3-cosymplectic manifolds.
On the other hand, every 3-cosymplectic manifold is  a local
Riemannian product of a hyper-K\"ahler factor and an abelian three
dimensional Lie group. Moreover, the formula \eqref{kueneth} could
suggest that every compact $3$-cosymplectic manifold is the  total
space of a toric bundle over a hyper-K\"ahler manifold. We disprove
this by an example in Section~\ref{example}. Namely, we construct a
compact $3$-cosymplectic seven dimensional manifold $M^7$ such that
$H^*_B\left( M^7 \right)$ cannot coincide with the cohomology ring
of a hyper-K\"ahler manifold. Note that in particular $M^7$ is not a
global product of  a hyper-K\"ahler manifold and $\mathbb{T}^3$,
which answers the open question about the existence of non-trivial
examples of such manifolds.

\section{Preliminaries}
An \emph{almost contact manifold} is an odd-dimensional manifold
$M$ which carries a field $\phi$ of endomorphisms of the tangent
spaces, a vector field $\xi$, called \emph{characteristic} or
\emph{Reeb vector field}, and a $1$-form $\eta$ satisfying
\begin{equation*}
\phi^2=-I+\eta\otimes\xi, \qquad \eta\left(\xi\right)=1,
\end{equation*}
where $I\colon TM\rightarrow TM$ is the identity mapping. From the
definition it follows that $\phi\xi=0$, $\eta\circ\phi=0$ and that
the $(1,1)$-tensor field $\phi$ has constant rank $2n$ (cf.
\cite{blairbook}). An almost contact  manifold
$\left(M,\phi,\xi,\eta\right)$ is said to be \emph{normal} when the
tensor field $N_{\phi}=\left[\phi,\phi\right]+2d\eta\otimes\xi$
vanishes identically, where $\left[\phi,\phi\right]$ is the
Nijenhuis torsion of $\phi$. It is known (see e.g. \cite[page
44]{blairbook}) that any almost contact manifold
$\left(M,\phi,\xi,\eta\right)$ admits a Riemannian metric $g$ such
that
\begin{equation}\label{compatibile}
g\left(\phi E,\phi F\right)=g\left(E,F\right)-\eta\left(E\right)\eta\left(F\right)
\end{equation}
holds for all $E,F\in\Gamma\left(TM\right)$. This metric $g$ is
called a \emph{compatible metric} and the manifold $M$ together with
the structure $\left(\phi,\xi,\eta,g\right)$ is called an
\emph{almost contact metric manifold}. As an immediate consequence
of \eqref{compatibile}, one has $\eta=g\left(\cdot,\xi\right)$ and
$g\left( \phi E, F \right)= - g\left( E,\phi F \right)$. Hence
$\Phi\left(E,F\right)=g\left(E,\phi F\right)$ defines a $2$-form,
which is called the \emph{fundamental $2$-form} of $M$.  Almost
contact metric manifolds such that both $\eta$ and $\Phi$ are closed
are called \emph{almost cosymplectic manifolds} and those for which
$d\eta=\Phi$ are called \emph{contact metric manifolds}. Finally, a
normal almost cosymplectic manifold is called a \emph{cosymplectic
manifold}, and a normal contact metric manifold is said to be a
\emph{Sasakian manifold}. In terms of the covariant derivative of
$\phi$, the cosymplectic and the Sasakian conditions can be
expressed respectively by
\begin{equation*}
\nabla\phi=0
\end{equation*}
and
\begin{equation*}
\left(\nabla_E\phi\right)F=g\left(E,F\right)\xi-\eta\left(F\right)E,
\end{equation*}
for all $E,F\in\Gamma\left(TM\right)$.

It should be noted that both
in Sasakian and in cosymplectic manifolds $\xi$ is a Killing vector
field. The Sasakian and the cosymplectic manifolds represent the
two extremal cases of the larger class of quasi-Sasakian manifolds
(cf. \cite{blairqs}).

An  \emph{almost contact $3$-structure} on  a
$\left(4n+3\right)$-dimensional smooth  manifold $M$
is given by three almost contact structures
$\left(\phi_1,\xi_1,\eta_1\right)$,
$\left(\phi_2,\xi_2,\eta_2\right)$,
$\left(\phi_3,\xi_3,\eta_3\right)$ satisfying the following
relations, for every $\alpha,\beta\in\left\{1,2,3\right\}$,
\begin{gather}
\phi_\alpha\phi_\beta-\eta_\beta\otimes\xi_\alpha = \sum_{\gamma=1}^{3}\epsilon_{\alpha\beta\gamma}\phi_\gamma - \delta_{\alpha\beta}I,\label{3-sasaki} \\
\phi_\alpha \xi_\beta = \sum_{\gamma=1}^{3}\epsilon_{\alpha\beta\gamma} \xi_\gamma, \quad \eta_\alpha\circ\phi_\beta =  \sum_{\gamma=1}^{3}\epsilon_{\alpha\beta\gamma} \eta_\gamma, \label{3-sasaki1}
\end{gather}
where $\epsilon_{\alpha\beta\gamma}$ is the totally antisymmetric
symbol. This notion was introduced  by Kuo (\cite{kuo}) and,
independently, by Udriste (\cite{udriste}). In \cite{kuo} Kuo proved
that given an almost contact $3$-structure
$\left(\phi_\alpha,\xi_\alpha,\eta_\alpha\right)$, $\alpha\in
\{1,2,3\}$,  there exists a Riemannian metric $g$ compatible with
each of the structures and hence we can speak of \emph{almost
contact metric $3$-structure}. It is well known that in any almost
$3$-contact  manifold the Reeb vector fields $\xi_1,\xi_2,\xi_3$ are
orthonormal with respect to any compatible metric $g$ and that the
structural group of the tangent bundle is reducible to
$Sp\left(n\right)\times \{I_3\}$. Moreover, the tangent bundle of
any almost 3-contact metric manifold splits up as the orthogonal sum
$TM={\mathcal H}\oplus{\mathcal V}$, where the $4n$-dimensional
subbundle
${\mathcal{H}}=\bigcap_{\alpha=1}^{3}\ker\left(\eta_\alpha\right)$
is called the \emph{horizontal distribution} and ${\mathcal
V}=\left\langle\xi_1,\xi_2,\xi_3\right\rangle$ is  called the
\emph{vertical} (or \emph{Reeb}) \emph{distribution}. An almost
$3$-contact manifold $M$  is  said  to  be \emph{normal}  if each
almost contact  structure
$\left(\phi_\alpha,\xi_\alpha,\eta_\alpha\right)$ is normal.

Let $\left(\phi_\alpha,\xi_\alpha,\eta_\alpha,g\right)$ be an almost
contact metric $3$-structure. When each structure is Sasakian $M$ is
called a \emph{$3$-Sasakian manifold}.

By  an   \emph{almost $3$-cosymplectic manifold}   we  mean an
almost  $3$-contact  metric  manifold $M$ such that each almost
contact metric structure
$\left(\phi_\alpha,\xi_\alpha,\eta_\alpha,g\right)$ is almost
cosymplectic. The almost cosymplectic $3$-structure
$\left(\phi_\alpha,\xi_\alpha,\eta_\alpha,g\right)$ is called
\emph{cosymplectic} if it is normal. In this case $M$ is said to be
a \emph{$3$-cosymplectic manifold}. However it has been proved
recently in \cite[Theorem 4.13]{pastore} that these two notions are
the same, i.e. every almost $3$-cosymplectic manifold is
$3$-cosymplectic.

Just as in the case of a single structure, the 3-Sasakian and the
3-cosymplectic manifolds represents the two extremal cases of the
larger class of 3-quasi-Sasakian manifolds (cf.
\cite{cappellettidenicoladileo2}).

In any $3$-cosymplectic manifold the forms $\eta_\alpha$ and
$\Phi_\alpha$ are harmonic (\cite[Lemma~3]{goldbergyano}). Moreover, we have
that $\xi_\alpha$, $\eta_\alpha$, $\phi_\alpha$ and $\Phi_\alpha$
are $\nabla$-parallel. In particular
\begin{equation}\label{commutatore}
    \left[\xi_\alpha,\xi_\beta\right] = \nabla_{\xi_\alpha}\xi_\beta-\nabla_{\xi_\beta}\xi_\alpha = 0
\end{equation}
for all $\alpha,\beta\in\left\{1,2,3\right\}$, so that $\mathcal V$ defines a
$3$-dimensional foliation ${\mathcal F}_3$ of $M^{4n+3}$. Since each Reeb vector field is Killing and is parallel, such a foliation turns out to be Riemannian with totally geodesic leaves.

Recall that a foliation $\mathcal{F}$ is \emph{regular} (in the
sense of Palais~\cite{palais}) if each point $p\in M$ has a foliated
coordinate chart $\left( U,p \right)$ such that each leaf of
$\mathcal{F}$ passes through $U$ at most once.

\begin{theorem}\emph{(\cite[Corollary~3.10]{cappellettidenicola})}\label{projectable}
Let $\left(M^{4n+3},\phi_\alpha,\xi_\alpha,\eta_\alpha,g\right)$
be a $3$-cosymplectic manifold.
If the foliation $\mathcal{F}_3 $ is regular, then the space
of leaves $M^{4n+3}/{\mathcal{F}_3 }$ is a hyper-K\"{a}hler manifold of dimension $4n$.
Consequently, every $3$-cosymplectic manifold is Ricci-flat.
\end{theorem}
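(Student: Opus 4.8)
The plan is to show that regularity of $\mathcal{F}_3$ gives the space of leaves a smooth manifold structure, and then to push the three tensor fields $\phi_\alpha$ (together with the transversal part of the metric $g$) down to honest tensor fields on $M/\mathcal{F}_3$, verifying that the resulting quaternionic triple together with the induced metric is hyper-K\"ahler. First I would invoke the classical theory of regular foliations (Palais): if $\mathcal{F}_3$ is regular and the leaves are nice (here compact or at least the foliation is simple), the quotient $M/\mathcal{F}_3$ inherits the structure of a smooth $4n$-dimensional manifold such that the projection $\pi\colon M \to M/\mathcal{F}_3$ is a submersion whose fibers are exactly the leaves. The vertical distribution is $\mathcal{V}=\langle\xi_1,\xi_2,\xi_3\rangle$ and the horizontal distribution $\mathcal{H}=\bigcap_\alpha\ker\eta_\alpha$ provides a natural complement, so $d\pi$ restricts to an isomorphism $\mathcal{H}_p\to T_{\pi(p)}(M/\mathcal{F}_3)$ at each point.

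Next I would descend the structure tensors. The key is that $\phi_\alpha$ preserves $\mathcal{H}$ (from \eqref{3-sasaki} and \eqref{3-sasaki1} one checks that $\phi_\alpha$ maps $\mathcal{H}$ to $\mathcal{H}$, since $\eta_\beta(\phi_\alpha E)=-\eta_\alpha(\phi_\beta E)$ vanishes on $\mathcal{H}$), and that the restriction $g|_{\mathcal{H}}$ and each $\phi_\alpha|_{\mathcal{H}}$ are projectable, i.e.\ invariant under the flows of $\xi_1,\xi_2,\xi_3$. Projectability is where I would use the parallelism statements recorded just before the theorem: since $\phi_\alpha$ and $g$ are $\nabla$-parallel and each $\xi_\beta$ is parallel, the Lie derivatives $\mathcal{L}_{\xi_\beta}\phi_\alpha$ and $\mathcal{L}_{\xi_\beta}g$ vanish on horizontal vectors, so the horizontal parts of $\phi_\alpha$ and $g$ are basic tensors and hence descend to tensors $J_\alpha$ and $\bar g$ on $M/\mathcal{F}_3$. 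On $M/\mathcal{F}_3$ the relations \eqref{3-sasaki} restrict on $\mathcal{H}$ to the quaternionic identities $J_\alpha J_\beta = \sum_\gamma \epsilon_{\alpha\beta\gamma} J_\gamma - \delta_{\alpha\beta} I$, making $(J_1,J_2,J_3)$ an almost hyper-Hermitian structure compatible with $\bar g$.

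Finally I would establish integrability: each $J_\alpha$ is parallel with respect to the Levi-Civita connection $\bar\nabla$ of $\bar g$, which upgrades the almost hyper-Hermitian structure to a genuine hyper-K\"ahler structure. Here I would relate $\bar\nabla$ to the horizontal projection of $\nabla$; because $\mathcal{F}_3$ is totally geodesic and Riemannian, O'Neill's submersion formulas simplify and the horizontal lift of $\bar\nabla_X Y$ agrees with the horizontal part of $\nabla_{X^*}Y^*$ up to terms that vanish by total geodesy. Since $\nabla\phi_\alpha=0$ on $M$, this transfers to $\bar\nabla J_\alpha=0$, giving that each $(J_\alpha,\bar g)$ is K\"ahler; a triple of compatible K\"ahler structures satisfying the quaternion relations is by definition hyper-K\"ahler. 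The main obstacle I anticipate is the careful bookkeeping of the O'Neill tensors to confirm that the horizontal projection of the ambient Levi-Civita connection really descends to the Levi-Civita connection of $\bar g$; once this is in hand, $\nabla\phi_\alpha=0$ yields $\bar\nabla J_\alpha = 0$ immediately. Ricci-flatness then follows as a standard consequence, since any hyper-K\"ahler manifold has holonomy contained in $Sp(n)$ and is therefore Ricci-flat, and the local Riemannian product structure $M \cong (M/\mathcal{F}_3)\times \mathbb{R}^3$ transfers Ricci-flatness back to $M$.
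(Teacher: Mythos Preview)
The paper does not actually prove this theorem; it is quoted from \cite[Corollary~3.10]{cappellettidenicola} without argument. The closest the paper comes is Proposition~\ref{localdecomposition}, which proves the local version, so that is the natural point of comparison.

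Your outline is correct in substance. The descent of the almost hyper-Hermitian structure via projectability is fine, and your plan to carry $\nabla\phi_\alpha=0$ through the Riemannian submersion to obtain $\bar\nabla J_\alpha=0$ works. One small slip: the obstruction to the horizontal lift of $\bar\nabla_X Y$ agreeing with the horizontal part of $\nabla_{X^*}Y^*$ is the O'Neill $A$-tensor, and that vanishes because $\mathcal{H}$ is \emph{integrable} (from $d\eta_\alpha=0$), not because the leaves of $\mathcal{F}_3$ are totally geodesic; total geodesy kills the $T$-tensor instead. In this setting both tensors vanish, so the conclusion is unaffected, but the attribution should be corrected.

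The paper's argument for the local analogue takes a shorter route and avoids O'Neill altogether: it observes that on a horizontal leaf the Nijenhuis tensor of $J_\alpha$ equals $N_{\phi_\alpha}-2d\eta_\alpha\otimes\xi_\alpha$, which vanishes by normality and $d\eta_\alpha=0$, so each $J_\alpha$ is integrable; compatibility with the induced metric is immediate from \eqref{compatibile}, and closedness of the K\"ahler forms follows from $d\Phi_\alpha=0$. This uses only the defining data of a cosymplectic $3$-structure and bypasses any computation with the Levi-Civita connection of the quotient. Your approach via $\nabla\phi_\alpha=0$ is more hands-on but has the advantage of yielding $\bar\nabla J_\alpha=0$ directly, which is the strongest form of the hyper-K\"ahler condition. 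Ricci-flatness follows in either case exactly as you say.
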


\begin{remark}
If we drop the assumption of regularity in Theorem \ref{projectable} and we assume instead that the vertical foliation has compact leaves, then the space of leaves is a hyper-K\"{a}hler orbifold, i.e. a second countable Hausdorff space locally modeled on
finite quotients of $\mathbb{R}^{m}$. We refer to \cite{molino} for the formal definition and properties of orbifolds and to \cite{satake}
for the generalization of geometric objects to the orbifold category.
\end{remark}

Concerning the horizontal subbundle, note that --- unlike the case
of $3$-Sasakian geometry --- in any 3-cosymplectic manifold
$\mathcal H$ is integrable. Indeed, for all
$X,Y\in\Gamma\left(\mathcal H\right)$,
$\eta_\alpha\left(\left[X,Y\right]\right)=-2d\eta_\alpha\left(X,Y\right)=0$
since $d\eta_\alpha=0$.

\section{Decomposition of the cohomology of 3-cosymplectic manifolds}
\label{decomposition}
Unless otherwise stated, in the remaining of the paper we will assume that
all manifolds are compact.
In this section we investigate some algebraic properties of the de
Rham cohomology $H^*_{dR}\left( M \right)$ of a $3$-cosymplectic
manifold $M^{4n+3}$. By the Hodge-de Rham theory the vector space
$H^k_{dR}\left( M\right)$ can be identified with the vector space
$\Omega^k_{H}\left( M \right)$ of harmonic $k$-forms on $M$.

For each $\alpha\in \left\{ 1,2,3 \right\}$ we define linear operators $\lambda_\alpha$
and $l_\alpha$ by
\begin{align*}
    l_\alpha \colon \Omega^k\left( M \right)& \to \Omega^{k+1}\left(
    M \right) & \lambda_\alpha \colon \Omega^{k+1}\left(M  \right) &\to
    \Omega^k\left( M \right)\\
    \omega &\mapsto  \eta_\alpha\wedge \omega & \omega&\mapsto
    i_{\xi_\alpha} \omega.
\end{align*}
We denote by $\left\{ A,B \right\}$ the anticommutator $AB+BA$ of
two linear operators $A$ and $B$. From $\eta_\alpha\left( \xi_\beta
\right) = \delta_{\alpha\beta}$ it follows that
\begin{align}
    \label{eq:lambdal}
    \left\{ \lambda_\alpha,l_\beta \right\} = \delta_{\alpha\beta}.
\end{align}
Moreover
\begin{align}
    \label{eq:anticom}
    \left\{ \lambda_\alpha,\lambda_\beta \right\} = \left\{ l_\alpha,l_\beta
    \right\} = 0.
\end{align}
Define $e_\alpha= l_\alpha \lambda_\alpha$. Then it follows from
\eqref{eq:lambdal} that $e_\alpha$ are idempotents. In fact
$$
e_\alpha e_\alpha = l_\alpha\lambda_\alpha l_\alpha\lambda_\alpha =-
l_\alpha l_\alpha \lambda_\alpha \lambda_\alpha + l_\alpha
\lambda_\alpha = e_\alpha.
$$
Moreover from \eqref{eq:lambdal} and \eqref{eq:anticom} it follows
that $\left[ e_\alpha,e_\beta \right] = 0$, for $\alpha\not =
\beta$. Thus $\left\{ e_1,e_2,e_3 \right\}$ are pairwise commuting
idempotents.

By \cite[Proposition~1]{chineamarrero} all operators $l_\alpha$,
$\lambda_\alpha$, and thus $e_\alpha$, preserve harmonic forms. Now
we fix $k\in \left\{ 0,\dots,4n+3 \right\}$ and consider the
restrictions of the operators $e_\alpha$ to $\Omega^k_H\left( M
\right)$, $\alpha\in\left\{ 1,2,3 \right\}$. Note that
$\Omega^k_H\left( M \right)$ is a finite dimensional vector space
over $\mathbb{R}$. As $e_\alpha$ is idempotent, its minimal
polynomial $m_\alpha\left( x \right)$ is a divisor of $x\left( x-1
\right)$. Therefore the only possible eigenvalues of $e_\alpha$ are
$0$ and $1$. Moreover, since $m_\alpha\left( x \right)$ does not
have multiple roots, the operator $e_\alpha$ is diagonalizable with
$0$ and $1$ on the diagonal.  As the operators $\left\{ e_1,e_2,e_3
\right\}$ commute with each other, by Bourbaki
\cite[Proposition~VII.13]{bourbakiAlgebraII} they can be
simultaneously diagonalized. Define for all triples
$\varepsilon_1$,~$\varepsilon_2$,~$\varepsilon_3\in \left\{ 0,1
\right\}$
$$
\Omega^k_{H,\varepsilon_1\varepsilon_2\varepsilon_3}\left( M \right)
= \left\{\, \omega\in \Omega^k_H\left( M \right) \,\middle|\,
e_\alpha \omega = \varepsilon_\alpha \omega,\  \alpha=1,2,
3\right\}.
$$
Since $e_1$, $e_2$, $e_3$ can be simultaneously diagonalized on
$\Omega^k_H\left( M \right)$ we get that
\begin{align}
    \label{eq:decomposition}
\Omega^k_H\left( M \right)  =
\bigoplus_{\varepsilon_1,\varepsilon_2,\varepsilon_3\in \left\{ 0,1
\right\}} \Omega^k_{H,\varepsilon_1\varepsilon_2\varepsilon_3}\left(
M \right).
\end{align}

Now let $\omega\in \Omega^k_{H, 0\varepsilon_2\varepsilon_3}\left( M
\right)$. Then $l_1 \omega\in
\Omega^{k+1}_{H,1\varepsilon_2\varepsilon_3}$. In fact
\begin{align*}
    e_1 l_1 \omega &  = l_1 \lambda_1 l_1 \omega = -
    \lambda_1 l_1 l_1 \omega + l_1 \omega = l_1\omega\\
    e_\alpha l_1 \omega & = l_1 e_\alpha \omega = \varepsilon_\alpha l_1
    \omega,\ \ \ \    \alpha = 2,3.
\end{align*}
Similarly if $\omega \in
\Omega^{k+1}_{H,1\varepsilon_2\varepsilon_3}\left( M \right) $, then
$\lambda_1 \omega \in
\Omega^{k}_{H,0\varepsilon_2\varepsilon_3}\left( M \right)$.
Therefore, we get maps of vector spaces
\begin{align*}
    l_1^{\varepsilon_2\varepsilon_3} \colon
    \Omega^k_{H,0\varepsilon_2\varepsilon_3}\left( M \right) \to
\Omega^{k+1}_{H,1\varepsilon_2\varepsilon_3}\left( M \right), &&
\lambda_1^{\varepsilon_2\varepsilon_3} \colon
\Omega^{k+1}_{H,1\varepsilon_2\varepsilon_3}\left( M \right) \to
\Omega^k_{H,0\varepsilon_2\varepsilon_3}\left( M \right).
\end{align*}
Now
$l_1^{\varepsilon_2\varepsilon_3}\lambda_1^{\varepsilon_2\varepsilon_3}
$ is the restriction of $e_1$ to
$\Omega^{k+1}_{H,1\varepsilon_2\varepsilon_3}\left( M \right)$ and
thus
$l_1^{\varepsilon_2\varepsilon_3}\lambda_1^{\varepsilon_2\varepsilon_3}=\id$.
Analogously the composition $\lambda_1^{\varepsilon_2\varepsilon_3}
l_1^{\varepsilon_2\varepsilon_3} $ is the restriction of
$$
\lambda_1 l_1 = \id - l_1\lambda_1  = \id - e_1
$$
to $\Omega^k_{H,0\varepsilon_2\varepsilon_3}\left( M \right) $ and
thus $\lambda_1^{\varepsilon_2\varepsilon_3}
l_1^{\varepsilon_2\varepsilon_3} = \id$. Thus
$\lambda_1^{\varepsilon_2\varepsilon_3}$ and
$l_1^{\varepsilon_2\varepsilon_3} $ are inverse isomorphisms between
the vector spaces $\Omega^k_{H,0\varepsilon_2\varepsilon_3}\left( M
\right) $ and $\Omega^{k+1}_{H,1\varepsilon_2\varepsilon_3}\left( M
\right)$. Replacing $1$ with $2$, $3$, and putting all together we
get for every $0\le k\le 4n$ the cube
$$
\xymatrix{ & \Omega^{k+1}_{H,100}\left( M \right) \ar[rr]^{l_2}
\ar[dd]^(.3){l_3}|!{[dl];[dr]}\hole &&
\Omega^{k+2}_{H,110} \left( M \right) \ar[dd]_{l_3}\\
\Omega^k_{H,000}\left( M \right)
\ar[rr]^(.7){l_2}\ar[dd]_{l_3}\ar[ru]^{l_1} &&
\Omega^{k+1}_{H,010}\left( M
\right)\ar[dd]^(.7){l_3} \ar[ru]^{l_1}\\
& \Omega^{k+2}_{H,101}\left( M \right)
{\ar[rr]^(.4){l_2}|!{[dr];[ur]}\hole}   &&
\Omega^{k+3}_{H,111}\left( M \right)\\
\Omega^{k+1}_{H,001}\left( M \right) \ar[rr]^{l_2}\ar[ru]^{l_1} &&
\Omega^{k+2}_{H,011}\left( M \right) \ar[ru]_{l_1} }
$$
whose faces are anti-commutative and edge arrows are isomorphisms of
vector spaces. Therefore the whole information about the cohomology
groups of $M$ is contained in the vector spaces $\Omega^k_{H,000}(M)$,
$0\le k\le 4n$.

Denote by $b_k^h$ the dimension of $\Omega^k_{H,000}(M)$. Then
\begin{align*}
    \dim \Omega^k_{H,100}& = \dim \Omega^k_{H,010} = \dim \Omega^k_{H,001} =
    \dim \Omega^{k-1}_{H,000} = b_{k-1}^h &  k& \ge 1\\
    \dim \Omega^k_{H,110} & = \dim \Omega^k_{H,101} = \dim
    \Omega^k_{H,011} = \dim \Omega^{k-2}_{H,000} = b_{k-2}^h & k& \ge 2\\
    \dim\Omega^k_{H,111} & = \dim \Omega^{k-3}_{H,000} = b_{k-3}^h & k&\ge 3.
\end{align*}
Therefore, from the decomposition \eqref{eq:decomposition} we get
    \begin{align}
        \label{eq:betti}
        \nonumber
        b_0 & = b^h_0\\
        b_1 & = b^h_1 + 3 b^h_0 \\
    \nonumber
        b_2 & = b^h_2 + 3 b^h_1 + 3 b^h_0\\
    \nonumber
        b_k & = b^h_k + 3 b^h_{k-1} + 3b^h_{k-2} + b^h_{k-3} && 3\le k\le 4n+3.
\end{align}

Now we will identify the vector spaces $\Omega^k_{H,000}\left( M
\right)$ with the basic cohomology of the Reeb foliation generated
by $\xi_\alpha$, $\alpha\in \left\{ 1,2,3 \right\}$,  on $M$.  In
our case the spaces of basic forms are given by
$$
\Omega^k_B\left( M \right) := \left\{\, \omega\in \Omega^k\left( M
\right) \,\middle|\, i_{\xi_\alpha} \omega = 0,\ i_{\xi_{\alpha}}d\omega =0
\mbox{ for each $\alpha=1,2,3$}
\right\}.
$$
The basic differential $d_B$ is the restriction of the exterior
derivative $d$ to $\Omega^*_B\left( M \right)$. The basic cohomology
spaces are defined as cohomology spaces of the complex $\left(
\Omega^*_B\left( M \right), d_B \right)$. In our case the mean
curvature of the Reeb foliation $\mathcal{F}_3$ is zero since the foliation is totally geodesic, therefore
we can use the transversal de Rham-Hodge theory developed
in~\cite{kamber}. By this theory, the basic cohomology spaces can be
identified with the kernel of the basic Laplacian
$$
\triangle_B := d_B \delta_B + \delta_B d_B,
$$
where $\delta_B $ is  the codifferential $\delta$ followed by the
orthogonal projection of $\Omega^*\left( M \right)$ onto $\Omega^*_B\left( M
\right)$.
We denote by $\Omega^*_{BH}\left( M \right)$ the kernel of
$\triangle_B$.
\begin{proposition}
    Let $M^{4n+3}$ be a compact $3$-cosymplectic manifold. Then
    $\Omega^*_{BH}\left( M \right) = \Omega^*_{H,000}\left( M \right)$.
    In particular, the numbers $b^h_k$ coincide with the basic Betti numbers
    of the Reeb foliation on $M$.
\end{proposition}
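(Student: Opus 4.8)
The plan is to prove the two inclusions $\Omega^*_{H,000}\left( M \right) \subseteq \Omega^*_{BH}\left( M \right)$ and $\Omega^*_{BH}\left( M \right)\subseteq\Omega^*_{H,000}\left( M \right)$ separately. First I would record the purely algebraic observation that, on any form, $e_\alpha\omega = 0$ is equivalent to $\lambda_\alpha\omega = i_{\xi_\alpha}\omega = 0$. One implication is immediate since $e_\alpha = l_\alpha\lambda_\alpha$; for the converse, applying $\lambda_\alpha$ to $e_\alpha\omega = l_\alpha\lambda_\alpha\omega = 0$ and using $\lambda_\alpha l_\alpha = \id - e_\alpha$ together with $\lambda_\alpha^2 = 0$ from \eqref{eq:anticom} yields $\lambda_\alpha\omega = 0$. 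Thus $\Omega^*_{H,000}\left( M \right)$ is exactly the space of harmonic forms annihilated by all three contractions $i_{\xi_\alpha}$.

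For the inclusion $\Omega^*_{H,000}\left( M \right)\subseteq\Omega^*_{BH}\left( M \right)$, let $\omega$ be harmonic with $i_{\xi_\alpha}\omega = 0$. Since $M$ is compact, harmonicity gives $d\omega = 0$ and $\delta\omega = 0$; in particular $i_{\xi_\alpha}d\omega = 0$, so $\omega$ is basic and $d_B\omega = d\omega = 0$. As $\delta_B = P\circ\delta$ with $P$ the orthogonal projection onto basic forms, $\delta_B\omega = P\delta\omega = 0$, and therefore $\triangle_B\omega = 0$.

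The reverse inclusion is where the real work lies. Given $\omega \in \Omega^*_{BH}\left( M \right)$, the transversal Hodge theory of Kamber--Tondeur applies because the Reeb foliation is totally geodesic, hence minimal; since $\delta_B$ is then the formal $L^2$-adjoint of $d_B$ on basic forms, the identity $\langle\triangle_B\omega,\omega\rangle = \|d_B\omega\|^2 + \|\delta_B\omega\|^2$ shows that $\triangle_B\omega = 0$ forces $d_B\omega = 0$ and $\delta_B\omega = 0$. Basicness gives $d\omega = d_B\omega = 0$ at once. The crux is to upgrade $\delta_B\omega = 0$, which only asserts $P\delta\omega = 0$, to the genuine equation $\delta\omega = 0$; for this I would show that $\delta\omega$ is already basic. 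Its invariance under $\lie_{\xi_\alpha}$ follows from $\lie_{\xi_\alpha}\delta\omega = \delta\lie_{\xi_\alpha}\omega = 0$, using that $\xi_\alpha$ is Killing (so $\lie_{\xi_\alpha}$ commutes with $\delta$) and that $\omega$ is basic. For the contraction, I would use that $d\eta_\alpha = 0$ gives the anticommutation $\{d,l_\alpha\} = 0$; taking $L^2$-adjoints and recalling that $\lambda_\alpha$ is adjoint to $l_\alpha$ yields $\{\delta,\lambda_\alpha\} = 0$, whence $i_{\xi_\alpha}\delta\omega = \lambda_\alpha\delta\omega = -\delta\lambda_\alpha\omega = -\delta i_{\xi_\alpha}\omega = 0$. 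Thus $\delta\omega$ is basic, so $\delta\omega = P\delta\omega = \delta_B\omega = 0$, and $\omega$ is harmonic with $i_{\xi_\alpha}\omega = 0$, i.e. $\omega\in\Omega^*_{H,000}\left( M \right)$.

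The single step I expect to be the main obstacle is precisely this passage from $\delta_B\omega = 0$ to $\delta\omega = 0$: the projection $P$ in the definition of $\delta_B$ could a priori discard a nonbasic component of $\delta\omega$, and ruling this out rests on the anticommutation relation $\{\delta,\lambda_\alpha\}=0$, which in turn is a consequence of the cosymplectic condition $d\eta_\alpha = 0$. Everything else is formal Hodge theory together with the facts, already established in the preliminaries, that the $\xi_\alpha$ are Killing and the $\eta_\alpha$ closed.
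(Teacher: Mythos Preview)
Your proof is correct and follows essentially the same approach as the paper: both arguments hinge on showing that $\delta$ sends basic forms to basic forms, using the anticommutation $\{\delta,\lambda_\alpha\}=0$ (you derive it via $L^2$-adjoints of $\{d,l_\alpha\}=0$, the paper via Hodge-star duality) together with the commutation $[\delta,\mathcal{L}_{\xi_\alpha}]=0$ coming from $\xi_\alpha$ being Killing. The only difference is organizational---you prove the two inclusions $\Omega^*_{H,000}\subseteq\Omega^*_{BH}$ and $\Omega^*_{BH}\subseteq\Omega^*_{H,000}$ separately, whereas the paper packages the same computations into the single statement $\triangle_B=\triangle$ on $\Omega^*_B(M)$---but the substantive content is the same.
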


\begin{proof}
    First we show that $\Omega^*_{H,000}\left( M \right)\subset \Omega^*_B\left( M
    \right)$. Let $\omega\in \Omega^k_{H,000}\left( M \right)$.
    Then
    $$
    i_{\xi_\alpha}\omega = \lambda_\alpha\omega = \left( \lambda_\alpha
    -l_\alpha\lambda_\alpha^2 \right)\omega = \lambda_\alpha l_\alpha
    \lambda_\alpha \omega = \lambda_\alpha e_\alpha \omega=0,\ \ \   \alpha\in
    \left\{ 1,2,3 \right\}.
    $$
    Moreover, $d\omega =0$ therefore $\omega\in \Omega^k_B\left( M
    \right)$. Thus we have to show that $\Omega^*_{H,000}\left( M
    \right)$ is the kernel of $\triangle_B$. We know that
    $\Omega^*_{H,000}\left( M \right)$ is the kernel of $\triangle$. Thus it
    is enough to
     show that $\triangle_B = \triangle$ on $\Omega^*_B\left( M
    \right)$.
    From the definitions of $\triangle$ and $\triangle_B$ we see that it is
    enough to check that $\delta = \delta_B$ on $\Omega^*_B\left( M
    \right)$.
Recall, that $\delta_B$ is the restriction of $\delta$ to $\Omega^*_B\left( M
\right)$ followed by the orthogonal projection from $\Omega^*\left( M
\right)$ to $\Omega^*_B\left( M \right)$. Therefore, the map $\delta_B$
coincides with the restriction of $\delta$ to $\Omega^*_B\left( M \right)$ if
and only if $\delta\left( \Omega^*_B\left( M \right) \right)\subset
\Omega^*_B\left( M \right)$.

    Let $\omega\in \Omega^k_B\left( M \right)$. The operators
    $l_\alpha$ and $d$ anticommute in our case, since $l_\alpha$
    is the  wedge product with a closed $1$-form.
    As shown in~\cite[pages 97-98]{goldberg},  on a  Riemannian
    manifold the usual operator of  interior product $i_X$, where $X$ is  a vector
    field, can be defined as the Hodge dual of the
    operator $g\left( X,- \right)\wedge -$.
    Thus $\lambda_\alpha=i_{\xi_\alpha}$ is the Hodge dual of $l_\alpha=\eta_\alpha\wedge
    -$. Since $\delta$ is the Hodge dual of $d$ we get that
    $\delta$ and $\lambda_\alpha$ anticommute, which implies
\begin{align}\label{eq:ixidelta}
    i_{\xi_{\alpha}} \delta \omega & = -\delta i_{\xi_{\alpha}} \omega = 0.
\end{align}
Now we use the fact that the anticommutator of $i_{\xi_\alpha}$ and
$d$ is the Lie derivative $\mathcal{L}_{\xi_{\alpha}}$. In the last
paragraph of page 109 of~\cite{goldberg}, it is shown that for a
Killing vector field $X$
$$
\mathcal{L}_X + \{\delta, g\left( X,- \right)\wedge-\} = 0.
$$
Since $\xi_{\alpha}$ is a
Killing vector field, we get
$$
\mathcal{L}_{\xi_{\alpha}} + \{\delta, l_\alpha\} = 0.
$$
Therefore, $\delta$ and $\mathcal{L}_{\xi_{\alpha}}$ commute
$$
\left[ \delta, \mathcal{L_{\xi_\alpha}} \right] =- \left[ \delta, \left\{ \delta,
l_\alpha
\right\} \right] = -\delta^2 l_\alpha - \delta l_\alpha \delta + \delta l_\alpha
\delta + l_\alpha \delta^2 = 0
$$
and by \eqref{eq:ixidelta} we get
\begin{align*}
    i_{\xi_{\alpha}} d \delta \omega & = \mathcal{L}_{\xi_{\alpha}} \delta
    \omega - d i_{\xi_\alpha } \delta \omega
      = \delta \mathcal{L}_{\xi_{\alpha}} \omega + 0
     = \delta \left( di_{\xi_{\alpha}} + i_{\xi_{\alpha}} d \right) \omega
      = 0.
\end{align*}
In the last step we use that $\omega$ is basic.
Thus if $\omega\in \Omega^*_B\left( M \right)$ then $\delta \omega \in
\Omega^*_B\left( M \right)$. This concludes the proof.

\end{proof}

    \section{Action of $so\left( 4,1 \right)$ on the cohomology of 3-cosymplectic manifolds}
    \label{action}
In this section we will show that $\Omega^k_{H,000}\left( M \right)$
admits an action of the Lie algebra $so\left( 4,1 \right)$. This
result is the odd-dimensional analogue of the one obtained by Verbitsky
in~\cite{verbitski} about the action of $so\left( 4,1 \right)$ on
the cohomology groups of a hyper-K\"ahler manifold $M^{4n}$. In fact, intuitively
the space
$\bigoplus_{k=0}^{4n}\Omega^k_{H,000}\left( M \right)$ can be
thought of as a cohomology ring of the hyper-K\"ahler orbifold obtained
from $M^{4n+3}$ by taking the quotient under the action of the three
Reeb vector fields.

For every cyclic permutation $\left( \alpha,\beta,\gamma \right)$ of
$\left( 1,2,3 \right)$ we denote by $\Xi_{\alpha}$ the $2$-form
\begin{eqnarray}
    \label{eq:xi}
\Xi_\alpha := \frac 12 \left(\Phi_\alpha + 2 \eta_\beta\wedge
\eta_\gamma\right).
\end{eqnarray}
    Define the operators $L_\alpha\colon \Omega^k\left( M \right)\to
\Omega^{k+2}\left( M \right)$ and $\Lambda_\alpha  \colon
\Omega^{k+2}\left( M \right)\to \Omega^k \left( M \right)$ by $L_\alpha \omega =  \Xi_\alpha
\wedge \omega$ and $\Lambda_\alpha := * L_\alpha *$.

We will give now a local description of these operators.
Let $$\left\{ X_1,\phi_1 X_1,\phi_2 X_1, \phi_3
X_1, \dots, X_n, \phi_1 X_n, \phi_2 X_n, \phi_3 X_n,
\xi_1,\xi_2,\xi_3 \right\}$$ be an orthonormal basis of vector
fields in some open subset $U$  of $M$. Denote by $\zeta_s$ the
$1$-form dual to $X_s$, that is $\zeta_s = g\left( X_s, - \right)$.
Then
\begin{align}
    \label{eq:zeta}
    i_{\phi_\alpha X_s} \left( \phi_\alpha^* \zeta_t \right) = g\left( X_s,
    \phi_\alpha \left( \phi_\alpha X_t \right)
    \right) = g\left( X_s, \phi_\alpha^2 X_t \right)= - \delta_{st},\qquad 1\leq s,t\leq n.
\end{align}
Therefore the set
\begin{align}
    \label{eq:basis}
    \left\{ \zeta_1,\phi_1^* \zeta_1,\phi_2^* \zeta_1,\phi_3^*
    \zeta_1,\dots, \zeta_n , \phi_1^* \zeta_n, \phi_2^* \zeta_n, \phi_3^*
    \zeta_n, \eta_1, \eta_2,\eta_3 \right\}
\end{align}
is a basis of $1$-forms on $U$.

\begin{proposition}
    \label{Phi}
    Let $(\alpha,\beta,\gamma)$ be a cyclic permutation of $(1,2,3)$. Then

\begin{equation}
    \label{eq:Phi}
    \Phi_\alpha = 2  \sum_{s=1}^n \left( \zeta_s \wedge \phi_\alpha^*
    \zeta_s - \phi_\beta^* \zeta_s \wedge \phi_\gamma^* \zeta_s\right) -
    2\eta_\beta\wedge \eta_\gamma
\end{equation}
and therefore
\begin{equation}
    \label{eq:Xi}
    \Xi_\alpha =   \sum_{s=1}^n \left( \zeta_s \wedge \phi_\alpha^*
    \zeta_s - \phi_\beta^* \zeta_s \wedge \phi_\gamma^* \zeta_s\right).
\end{equation}

\end{proposition}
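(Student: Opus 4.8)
The plan is to expand the globally defined fundamental $2$-form $\Phi_\alpha$ in the coframe dual to the orthonormal frame $\left\{ X_1,\phi_1 X_1,\dots,\phi_3 X_n,\xi_1,\xi_2,\xi_3 \right\}$, using that a $2$-form is completely determined by its values $\Phi_\alpha(E,F)=g(E,\phi_\alpha F)$ on pairs of frame vectors. Formula \eqref{eq:Xi} for $\Xi_\alpha$ is then an immediate consequence: substituting \eqref{eq:Phi} into the definition \eqref{eq:xi}, the term $-2\eta_\beta\wedge\eta_\gamma$ cancels the $+2\eta_\beta\wedge\eta_\gamma$ and the overall factor $\tfrac12$ absorbs the $2$, leaving exactly \eqref{eq:Xi}. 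Hence the whole content of the statement lies in establishing \eqref{eq:Phi}.

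First I would record the action of $\phi_\alpha$ on the frame. Since every $X_s$ lies in $\mathcal H$, relation \eqref{3-sasaki} gives $\phi_\alpha^2 X_s=-X_s$, $\phi_\alpha\phi_\beta X_s=\phi_\gamma X_s$ and $\phi_\alpha\phi_\gamma X_s=-\phi_\beta X_s$ (using $\epsilon_{\alpha\beta\gamma}=1$ and $\epsilon_{\alpha\gamma\beta}=-1$), while \eqref{3-sasaki1} gives $\phi_\alpha\xi_\alpha=0$, $\phi_\alpha\xi_\beta=\xi_\gamma$ and $\phi_\alpha\xi_\gamma=-\xi_\beta$. Because $\phi_\alpha$ preserves both the splitting $TM=\mathcal H\oplus\mathcal V$ and each four-dimensional block $\langle X_s,\phi_1 X_s,\phi_2 X_s,\phi_3 X_s\rangle$, orthonormality forces all mixed horizontal-vertical and all cross-block components of $\Phi_\alpha$ to vanish. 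A short check then shows that the only nonzero components are $\Phi_\alpha(X_s,\phi_\alpha X_s)=-1$ and $\Phi_\alpha(\phi_\beta X_s,\phi_\gamma X_s)=-1$ within each block, together with $\Phi_\alpha(\xi_\beta,\xi_\gamma)=-1$ in the vertical part.

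Next I would reassemble $\Phi_\alpha$ from these components. The key point is the sign relating $\phi_\alpha^*\zeta_s$ to the coframe: the $1$-form dual to $\phi_\alpha X_s$ is $g(\phi_\alpha X_s,-)$, whereas $\phi_\alpha^*\zeta_s=g(X_s,\phi_\alpha-)=-g(\phi_\alpha X_s,-)$, so $\phi_\alpha^*\zeta_s$ is the \emph{negative} of that dual --- this is precisely the minus sign already visible in \eqref{eq:zeta}. Expanding $\Phi_\alpha$ in the dual coframe and rewriting each dual $1$-form as $-\phi_\bullet^*\zeta_s$, the first summand acquires a single sign flip and the second a double one; combined with the common component value $-1$, this produces the relative minus sign between the two summands, and summing over $s$ together with the vertical contribution yields \eqref{eq:Phi}.

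The step requiring the most care is the bookkeeping of signs and constants rather than any conceptual difficulty. One must keep straight the $\epsilon$-signs, in particular $\phi_\alpha\phi_\gamma=-\phi_\beta$ as opposed to $\phi_\alpha\phi_\beta=+\phi_\gamma$, as well as the minus sign in $\phi_\alpha^*\zeta_s=-g(\phi_\alpha X_s,-)$. Moreover, the overall factor $2$ and the coefficient $-2$ of $\eta_\beta\wedge\eta_\gamma$ in \eqref{eq:Phi} come out correctly only once the normalization of the wedge product is fixed: with the convention $(\omega\wedge\tau)(E,F)=\tfrac12\bigl(\omega(E)\tau(F)-\omega(F)\tau(E)\bigr)$, any $2$-form expands as $\omega=\sum_{i<j}2\,\omega(e_i,e_j)\,\theta^i\wedge\theta^j$ relative to an orthonormal frame $\left\{ e_i \right\}$ with dual coframe $\left\{ \theta^i \right\}$, which is what reproduces the factors appearing in \eqref{eq:Phi}.
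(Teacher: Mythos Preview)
Your proposal is correct and follows essentially the same approach as the paper: both compute the values of $\Phi_\alpha$ on pairs of frame vectors (finding exactly the three nonzero components $\Phi_\alpha(X_s,\phi_\alpha X_s)=\Phi_\alpha(\phi_\beta X_s,\phi_\gamma X_s)=\Phi_\alpha(\xi_\beta,\xi_\gamma)=-1$), then match against the right-hand side using the wedge convention with the factor $\tfrac12$ and the sign $\phi_\alpha^*\zeta_s=-g(\phi_\alpha X_s,-)$ from~\eqref{eq:zeta}. Your write-up is in fact a bit more explicit than the paper's about why the remaining components vanish and where each sign and factor of~$2$ comes from.
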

\begin{proof}
Let us denote by $\left\langle \ ,\  \right\rangle$ the natural
pairing between $k$-forms and $k$-vector fields. By definition of
$\Phi_\alpha$ we have
\begin{align*}
\left\langle  \Phi_\alpha, X_s\wedge \phi_\alpha X_s \right\rangle &
= g \left( X_s,
    \phi_\alpha^2 X_s
    \right) = -1\\
\left\langle  \Phi_\alpha,  \phi_\beta X_s\wedge \phi_\gamma X_s
\right\rangle   & = g\left(
    \phi_\beta X_s, \phi_\alpha \phi_\gamma X_s \right) = g\left( \phi_\beta
    X_s, - \phi_\beta X_s
    \right) = -1\\
\left\langle  \Phi_\alpha , \eta_\beta\wedge \eta_\gamma
\right\rangle & = g\left( \eta_\beta,
    \phi_\alpha \eta_\gamma
    \right) = g \left( \eta_\beta , -\eta_\beta  \right) = -1,
\end{align*}
and $\left\langle \Phi_\alpha,V \right\rangle =0$ for any other
element $V$ of the basis of the space of bivector fields on $U$. On
the other hand,
\begin{align*}
    \left\langle \zeta_s\wedge \phi_\alpha^* \zeta_s, X_s\wedge \phi_\alpha
    X_s\right\rangle & = \frac{1}{2} \zeta_s\left( X_s \right)
    \phi_\alpha^* \zeta_s
    \left( \phi_\alpha X_s \right) = -\frac 12\\
    \left\langle \phi_\beta^*\zeta_s\wedge \phi_\gamma^* \zeta_s, \phi_\beta
    X_s\wedge \phi_\gamma X_s \right\rangle  &= \frac 12  \phi_\beta^*
    \zeta_s \left( \phi_\beta X_s \right) \phi_\gamma^*\zeta_s \left(
    \phi_\gamma X_s \right) = \frac 12\\
    \left\langle \eta_\beta\wedge \eta_\gamma, \xi_\beta \wedge \xi_\gamma
    \right\rangle &= \frac 12 \eta_\beta\left( \xi_\beta \right)
    \eta_\gamma \left( \xi_\gamma \right) = \frac 12.
\end{align*}
\end{proof}
Note that for any $k$-form $\omega$ on $M$, any vector field $Y$ of
unit norm, and $\rho$ the dual $1$-form such that $\rho\left( Y
\right)=1$, we have
\begin{align}
    \label{eq:star}
*\left(\rho\wedge * \omega\right) = \left( -1 \right)^{\left( 4n+3-k
\right)\left( k-1
\right)} i_{Y} \omega.
\end{align}
    From \eqref{eq:zeta},
    \eqref{eq:Xi}, \eqref{eq:star},  and the fact that $*^2 = \id$ for odd dimensional manifolds, it
is easy to obtain the formula
\begin{equation}
    \label{eq:Lambda}
    \Lambda_\alpha = \sum_{s=1}^n \left( i_{X_s} i_{\phi_\alpha X_s} +
    i_{\phi_\beta X_s}i_{\phi_\gamma X_s} \right).
\end{equation}

\begin{remark}
    \label{horizontal}
    From \cite[Lemma~2.3]{blairgoldberg} it follows that the
operators $\omega\mapsto \Phi_\alpha \wedge \omega$ preserve
harmonic forms. Since the operator $\omega \mapsto \eta_\beta \wedge
\eta_\gamma \wedge \omega$ is equal to $l_\beta l_\gamma$, it also
preserves harmonicity. Then, by definition of the operators
$L_\alpha$, they preserve harmonicity as well.
 Since the Hodge star $*$ preserves
harmonic forms we get that also $\Lambda_\alpha$ preserves them.

 Now we verify that $L_\alpha\left( \Omega^*_{H,000}\left( M
\right)
\right)\subset \Omega^*_{H,000}\left( M \right)$ and
$\Lambda_\alpha\left( \Omega^*_{H,000}\left( M \right)
\right)\subset \Omega^*_{H,000}\left( M \right)$. For this it is enough to show
that $L_\alpha$ and $\Lambda_\alpha$ commute with $e_\mu$ for any pair
$1\le \alpha,\mu\le 3$. Since $\Lambda_\alpha$ is the Hodge dual of $L_\alpha$
and $\id - e_\mu$ is the Hodge dual of $e_\mu$, it is enough to check that
$L_\alpha$ commute with $e_\mu$.
We know that $e_\mu = l_\mu \lambda_\mu$. Since $l_\mu$ is the wedge
product with a $1$-form and $L_\alpha$ is the wedge product with a $2$-form,
they commute. Now, let $\omega\in \Omega^k\left( M \right)$, then
\begin{align*}
    \lambda_\mu L_\alpha \omega = i_{\xi_\mu}\left( \Xi_\alpha \wedge \omega
    \right) = \left( i_{\xi_\mu} \Xi_\alpha \right) \wedge \omega + \Xi_\alpha
    \wedge \left( i_{\xi_\mu} \omega \right) = \left( i_{\xi_\mu} \Xi_\alpha \right)
    \wedge \omega + L_\alpha \lambda_\mu \omega
\end{align*}
and by
\eqref{eq:Xi}
\begin{align*}
    i_{\xi_\mu} \Xi_\alpha = \sum_{s=1}^n  \left( i_{\xi_\mu} \zeta_s \wedge
    \phi^*_\alpha \zeta_s - \zeta_s \wedge i_{\xi_\mu} \phi^*_\alpha \zeta_s - i_{\xi_\mu}
    \phi^*_\beta \zeta_s \wedge \phi^*_\gamma \zeta_s + \phi^*_\beta\zeta_s
    \wedge i_{\xi_\mu} \phi^*_\gamma \zeta_s\right) = 0.
\end{align*}
As
consequence, we can restrict the operators $L_\alpha$ and
$\Lambda_\alpha$ to $\Omega^*_{H,000}\left( M \right)$. From now on,
we will consider $L_\alpha$ and $\Lambda_\alpha$ as endomorphisms of
$\Omega^*_{H,000}\left( M \right)$.
\end{remark}
    Define the operator $H\colon \Omega^k_{H,000}\left( M \right)\to
\Omega^k_{H,000}\left( M \right)$ by $H\omega = \left(2 n-k \right)
\omega$.
\begin{proposition}
    \label{llambdah}
    We have $\left[ L_\alpha, \Lambda_\alpha \right] =- H$ on
    $\Omega^*_{H,000}\left( M \right)$.
\end{proposition}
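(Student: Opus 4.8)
The plan is to reduce the identity to the standard symplectic Lefschetz $sl(2)$--relation, carried out pointwise in the local orthonormal coframe \eqref{eq:basis}. Fix a point and, for a $1$-form $\theta$ and a vector field $Y$, abbreviate $l_\theta := \theta\wedge -$ and keep $i_Y$ for contraction. These satisfy the fermionic relations $\{l_\theta,l_{\theta'}\}=0$, $\{i_Y,i_{Y'}\}=0$ and $\{l_\theta,i_Y\}=\theta(Y)$, so that $[L_\alpha,\Lambda_\alpha]$ is determined purely by the pairings of coframe elements against frame vectors. By \eqref{eq:Xi} and \eqref{eq:Lambda} both $L_\alpha$ and $\Lambda_\alpha$ are sums of degree--two monomials in these operators. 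Writing, for each $s$, the four $1$-forms $e^0_s=\zeta_s$, $e^1_s=\phi_\alpha^*\zeta_s$, $e^2_s=\phi_\beta^*\zeta_s$, $e^3_s=\phi_\gamma^*\zeta_s$ and the four vectors $f^0_s=X_s$, $f^1_s=\phi_\alpha X_s$, $f^2_s=\phi_\beta X_s$, $f^3_s=\phi_\gamma X_s$, we have $L_\alpha=\sum_s(l_{e^0_s}l_{e^1_s}-l_{e^2_s}l_{e^3_s})$ and $\Lambda_\alpha=\sum_s(i_{f^0_s}i_{f^1_s}+i_{f^2_s}i_{f^3_s})$.

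The first step is to evaluate the pairing matrix $e^i_s(f^j_{s'})$. Using \eqref{eq:zeta}, the identity $\phi_\mu^2=-\id$ on $\mathcal H$ and the quaternionic relations encoded in \eqref{3-sasaki}, together with the skew--symmetry $g(Y,\phi_\mu Y)=0$, one finds that inside a fixed block $s=s'$ the matrix $e^i_s(f^j_s)$ is diagonal and equal to $\mathrm{diag}(1,-1,-1,-1)$, the three negative entries being precisely the manifestation of $\phi_\mu^2=-\id$; all cross--type pairings within a block, and all pairings between distinct blocks, vanish. Consequently the operators attached to different indices $s$ are built from disjoint sets of anticommuting generators and, being even, they commute, so $[L_\alpha,\Lambda_\alpha]=\sum_s[\,l_{e^0_s}l_{e^1_s}-l_{e^2_s}l_{e^3_s}\,,\,i_{f^0_s}i_{f^1_s}+i_{f^2_s}i_{f^3_s}\,]$; for the same disjointness reason the two cross--type commutators inside each block vanish. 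Finally, since $\Xi_\alpha$ and $\Lambda_\alpha$ are purely horizontal and we work on $\Omega^*_{H,000}(M)$, whose elements are horizontal, the vertical directions $\xi_\mu,\eta_\mu$ never enter.

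It then remains to compute a single block. Introducing the operators $N^i_s:=l_{e^i_s}i_{f^i_s}$ and using $i_{f^0_s}l_{e^0_s}=\id-N^0_s$ and $i_{f^i_s}l_{e^i_s}=-\id-N^i_s$ for $i=1,2,3$ (the sign coming from the diagonal entry $-1$), a direct manipulation gives $[l_{e^0_s}l_{e^1_s},i_{f^0_s}i_{f^1_s}]=-\id+N^0_s-N^1_s$ and $[l_{e^2_s}l_{e^3_s},i_{f^2_s}i_{f^3_s}]=\id+N^2_s+N^3_s$, so that the block contributes $-2\,\id+N^0_s-N^1_s-N^2_s-N^3_s$. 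Now $N^0_s$ is idempotent and projects onto forms containing $e^0_s$, while for $i=1,2,3$ the operator $-N^i_s$ is the idempotent projecting onto forms containing $e^i_s$; hence $N^0_s-N^1_s-N^2_s-N^3_s$ reads off the horizontal degree carried by the $s$-th block. Summing over the $n$ blocks, $\sum_{s,i}(\text{projection onto }e^i_s)$ is exactly the total horizontal degree operator, which on $\Omega^k_{H,000}(M)$ equals $k\,\id$. Therefore $[L_\alpha,\Lambda_\alpha]=(k-2n)\,\id=-(2n-k)\,\id=-H$.

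The delicate point is the sign bookkeeping. The whole computation hinges on the diagonal pairing $\mathrm{diag}(1,-1,-1,-1)$: it is the three entries $-1$, produced by $\phi_\mu^2=-\id$ on $\mathcal H$, that turn the naive number operators into genuine projections and make the constant contributions add up to $-2n$ rather than to a spurious value. One must also verify carefully, via the quaternionic relations, that every cross--type and cross--block pairing indeed vanishes, since this is what permits the decoupling into independent $sl(2)$--blocks and fixes the overall sign of the answer as $-H$ rather than $+H$.
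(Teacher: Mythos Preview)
Your argument is correct and follows essentially the same route as the paper's proof: both work in the local orthonormal coframe, use the fermionic anticommutation relations (the paper's \eqref{eq:anticommutator} and \eqref{eq:zetaX} are exactly your pairing matrix $\mathrm{diag}(1,-1,-1,-1)$ and vanishing cross terms), reduce $[L_\alpha,\Lambda_\alpha]$ to a sum over decoupled blocks, and identify the resulting operator $\sum_s(N^0_s-N^1_s-N^2_s-N^3_s)$ with the horizontal degree. Your presentation is slightly more streamlined---packaging the computation via the pairing matrix and the number operators $N^i_s$---but the underlying calculation is identical to the paper's.
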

\begin{proof}
    Every element of $\Omega^k_{H,000}$ can be locally written   as a linear combination of
    wedges of elements in
    \begin{equation}
        \label{eq:basisshort}
            \left\{ \zeta_1,\phi_1^* \zeta_1,\phi_2^*
            \zeta_1,\phi_3^*
            \zeta_1,\dots, \zeta_n , \phi_1^* \zeta_n, \phi_2^*
            \zeta_n, \phi_3^*
            \zeta_n\right\}.
        \end{equation}
Note that for any $1\le s,t\le n$
and any cyclic permutation $\left(\alpha, \beta, \gamma\right)$ of $\left(1, 2,
3\right)$
\begin{align*}
    \left[ \zeta_s\wedge \phi_\alpha^* \zeta_s\wedge - ,
i_{\phi_\beta
X_t}i_{\phi_\gamma X_t}
     \right] & = 0 &
\left[  \phi_\beta^* \zeta_s
        \wedge \phi_\gamma^* \zeta_s \wedge -,
    i_{X_t}i_{\phi_\alpha X_t}
    \right]= 0
 \end{align*}
 and for $s\not= t$
 \begin{align*}
     \left[ \zeta_s\wedge \phi_\alpha^* \zeta_s\wedge - ,
    i_{X_t}i_{\phi_\alpha X_t}
     \right] & = 0 &
\left[  \phi_\beta^* \zeta_s
        \wedge \phi_\gamma^* \zeta_s \wedge -,
i_{\phi_\beta
X_t}i_{\phi_\gamma X_t}
    \right]= 0.
 \end{align*}
 Therefore by \eqref{eq:Xi} and \eqref{eq:Lambda}, we get
    \begin{align}
        \label{eq:commutator}
        \left[ L_\alpha, \Lambda_\alpha \right]&= \sum_{s=1}^n\left( \left[
        \zeta_s\wedge \phi^*_\alpha \zeta_s\wedge - ,
        i_{X_s}i_{\phi_\alpha X_s} \right] - \left[ \phi_\beta^* \zeta_s
        \wedge \phi_\gamma^* \zeta_s \wedge -, i_{\phi_\beta
        X_s}i_{\phi_\gamma X_s}\right]\right).
    \end{align}
    Now for any linear operators $a$, $b$, $c$, $d$, we have
    \begin{align}
\nonumber
        \left[ ab,cd \right]&  = \left[ ab,c \right]d + c \left[ ab,d
        \right] \\
\nonumber
        & = \left( a \left\{ b,c \right\} - \left\{ a,c \right\}b
        \right) d + c \left(a \left\{b,d  \right\} - \left\{ a,d
        \right\}b \right) \\
        \nonumber
        &= a \left\{ b,c \right\} d - \left\{ a,c \right\}bd + ca
        \left\{ b,d \right\} - c\left\{ a,d \right\} b
        \\& = a \left\{ b,c \right\}d - \left\{ a,c \right\}bd - ac
        \left\{ b,d \right\} - c\left\{ a,d \right\} b + \left\{ a,c
        \right\}\left\{ b,d \right\}.
        \label{eq:anticommutator}
    \end{align}
    It is also obvious that for arbitrary $\alpha$, $\beta\not=\gamma$:
    \begin{align}
        \label{eq:zetaX}
    \nonumber
          \left\{
        \zeta_s \wedge - , i_{\phi_\alpha X_s} \right\}
& = 0&  \left\{ \zeta_s \wedge -, i_{X_s} \right\} & = 1\\
         \left\{ \phi_\beta^* \zeta_s \wedge - , i_{\phi_\gamma X_s}
        \right\}& = 0& \left\{ \phi_\beta^* \zeta_s \wedge -, i_{\phi_\beta X_s}
        \right\}& =
        -1 \\ \nonumber
     \left\{ \phi_\alpha^* \zeta_s\wedge -, i_{X_s} \right\} & =0.
            \end{align}
        Therefore, using \eqref{eq:anticommutator} we get
            \begin{align*}
                \left[ L_\alpha, \Lambda_\alpha \right] & =
                \sum_{s=1}^n \left( - \phi_\alpha^* \zeta_s
                \wedge i_{\phi_\alpha X_s} +\zeta_s\wedge
                i_{X_s} - 1 -\left( \phi_\gamma^*\zeta_s \wedge
                i_{\phi_\gamma X_s} + \phi^*_\beta \zeta_s
                \wedge i_{\phi_{\beta} X_s} + 1 \right) \right)
                \\&= -2n +  \sum_{s=1}^n \left(\zeta_s \wedge
                i_{X_s} - \phi_\alpha^* \zeta_s \wedge
                i_{\phi_\alpha X_s} -
                \phi_\beta^* \zeta_s \wedge
                i_{\phi_\beta X_s} -
\phi_\gamma^* \zeta_s \wedge
                i_{\phi_\gamma X_s}
                \right).
            \end{align*}
            Now the sum in the last row operates on any fixed-degree form
            involving only elements in \eqref{eq:basisshort} by
            multiplying the form by its degree. Hence
            $$
            \left[ L_\alpha, \Lambda_\alpha \right]\omega = -
            H\omega
            $$
            for all $\omega \in \Omega^*_{H,000}$.
\end{proof}
For every cyclic permutation $(\alpha,\beta,\gamma)$ of $(1,2,3)$ we
define the operator
$$
K_\alpha =\sum_{s=1}^n \left( \phi_\alpha^* \zeta_s \wedge i_{X_s} +
\zeta_s \wedge i_{\phi_\alpha X_s }+ \phi_\gamma^* \zeta_s \wedge
i_{\phi_\beta X_s} - \phi^*_\beta \zeta_s \wedge i_{\phi_\gamma
X_s} \right).
$$
Let
$\rho_1$,\dots,  $\rho_k$ be a  sequence of elements in
\eqref{eq:basisshort}.  Then from \eqref{eq:zeta} and
\begin{align*}
    \phi_\alpha^* \phi_\beta^*  = -\phi_\gamma^*, \ \ \ \ \ \ \   \phi_\beta^* \phi_\alpha^*
    = \phi_\gamma^*,
\end{align*}
it follows that
$$
K_\alpha\left( \rho_1 \wedge \dots \wedge \rho_k\right)
=\sum_{j=1}^k \left( -1 \right)^{j+1}  \rho_1 \wedge \dots \wedge
\phi_\alpha^*\rho_j \wedge \dots \wedge \rho_k.
$$
\begin{proposition}
    \label{llambdak}
    For any cyclic permutation $(\alpha,\beta,\gamma)$ of $(1,2,3)$ we have on $\Omega^*_{H,000}\left( M \right)$
    \begin{align}
        \left[ L_\alpha, \Lambda_\beta \right]& = K_{\gamma}\\
        \left[ L_\alpha, \Lambda_\gamma \right] &= -  K_{\beta}.
        \label{eq:llambdak}
    \end{align}
    In particular $K_{\alpha}$ is globally defined, for each $\alpha\in\left\{1,2,3  \right\}$.
\end{proposition}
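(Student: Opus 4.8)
The plan is to compute the two commutators directly in a local orthonormal coframe, exactly as in the proof of Proposition~\ref{llambdah}, and then to recognize the resulting operators as $K_\gamma$ and $-K_\beta$. Starting from the local expressions \eqref{eq:Xi} and \eqref{eq:Lambda} and relabelling indices cyclically, I would write $L_\alpha = \sum_s \left( \zeta_s\wedge\phi_\alpha^*\zeta_s\wedge- \; - \; \phi_\beta^*\zeta_s\wedge\phi_\gamma^*\zeta_s\wedge- \right)$, $\Lambda_\beta = \sum_t \left( i_{X_t}i_{\phi_\beta X_t} + i_{\phi_\gamma X_t}i_{\phi_\alpha X_t} \right)$, and $\Lambda_\gamma = \sum_t \left( i_{X_t}i_{\phi_\gamma X_t} + i_{\phi_\alpha X_t}i_{\phi_\beta X_t} \right)$. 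Each commutator then becomes a double sum over $s$ and $t$ of brackets of the form $\left[ \rho_1\wedge\rho_2\wedge-, i_{Y_1}i_{Y_2} \right]$ with the $\rho_i$ taken from \eqref{eq:basisshort} and the $Y_j$ from the dual frame.

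First I would dispose of the off-diagonal terms. Expanding every bracket by the algebraic identity \eqref{eq:anticommutator}, one sees that each term is a product of anticommutators of a wedge operator $\rho\wedge-$ with an interior product $i_Y$, and such an anticommutator equals the scalar $\rho(Y)$. For $s\ne t$ every relevant scalar, e.g. $\phi_\alpha^*\zeta_s(X_t) = g\left( X_s,\phi_\alpha X_t \right)$, is the inner product of two distinct members of the orthonormal basis and hence vanishes; consequently all $s\ne t$ contributions drop out and only the diagonal part $s=t$ survives. For the diagonal part I would record the complete anticommutator table obtained from \eqref{eq:zeta} and the compatibility of $g$ with the $3$-structure, namely $\left\{ \zeta_s\wedge-, i_{X_s} \right\} = 1$, $\left\{ \phi_\mu^*\zeta_s\wedge-, i_{\phi_\nu X_s} \right\} = -\delta_{\mu\nu}$, with all remaining mixed anticommutators zero; this is precisely \eqref{eq:zetaX} extended to arbitrary indices.

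Feeding this table into \eqref{eq:anticommutator} collapses each of the four diagonal brackets to a single monomial. Collecting them, the computation of $\left[ L_\alpha,\Lambda_\beta \right]$ yields $\sum_s \left( \phi_\gamma^*\zeta_s\wedge i_{X_s} + \zeta_s\wedge i_{\phi_\gamma X_s} + \phi_\beta^*\zeta_s\wedge i_{\phi_\alpha X_s} - \phi_\alpha^*\zeta_s\wedge i_{\phi_\beta X_s} \right)$, which is exactly the defining expression for $K_\gamma$ after the cyclic relabelling $\left( \alpha,\beta,\gamma \right)\mapsto\left( \gamma,\alpha,\beta \right)$, and the analogous computation of $\left[ L_\alpha,\Lambda_\gamma \right]$ reproduces $-K_\beta$. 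Finally, since $L_\alpha$ and $\Lambda_\beta$ are globally defined endomorphisms of $\Omega^*_{H,000}\left( M \right)$ by Remark~\ref{horizontal}, their commutator is globally defined, which proves the last assertion that each $K_\alpha$ is globally defined on $\Omega^*_{H,000}\left( M \right)$.

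The main obstacle is entirely the sign- and index-bookkeeping: one must track both the signs generated by \eqref{eq:anticommutator} and the sign picked up when an interior product is commuted past a wedge operator, e.g. rewriting $i_{X_s}\left( \phi_\gamma^*\zeta_s\wedge- \right)$ as $-\phi_\gamma^*\zeta_s\wedge i_{X_s}$, so that the four surviving monomials line up, in both ordering and sign, with the prescribed form of $K_\gamma$ and $K_\beta$. There is no conceptual difficulty beyond this; the restriction to $\Omega^*_{H,000}\left( M \right)$ is what ensures that the forms involved are locally wedges of the horizontal coframe \eqref{eq:basisshort} alone, so that the derivation formula for $K_\alpha$ stated immediately before the proposition is applicable throughout.
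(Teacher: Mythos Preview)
Your proposal is correct and follows essentially the same route as the paper's own proof: expand $[L_\alpha,\Lambda_\beta]$ and $[L_\alpha,\Lambda_\gamma]$ in the local coframe via \eqref{eq:Xi} and \eqref{eq:Lambda}, apply the algebraic identity \eqref{eq:anticommutator} together with the anticommutator table \eqref{eq:zetaX}, and identify the resulting four-term sums with $K_\gamma$ and $-K_\beta$. The only difference is cosmetic: you spell out the vanishing of the off-diagonal ($s\neq t$) contributions, whereas the paper passes directly to the single sum over $s$, that step having already been illustrated in the proof of Proposition~\ref{llambdah}.
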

\begin{proof}
    We have
    \begin{align*}
        \left[ L_\alpha,\Lambda_{\beta}  \right] &=
        \sum_{s=1}^n \left(
        \left[ \zeta_s\wedge \phi_\alpha^*\zeta_s\wedge -,
        i_{X_s}i_{\phi_\beta X_s} \right]
        +\left[ \zeta_s\wedge \phi_\alpha^*\zeta_s\wedge -,
        i_{\phi_\gamma X_s}i_{\phi_{\alpha}X_s} \right]\right.
        \\&\phantom{=4\sum}\left.-\left[ \phi_\beta^* \zeta_s \wedge \phi_\gamma^* \zeta_s\wedge
        -, i_{X_s}i_{\phi_\beta X_s} \right]
        -\left[ \phi_\beta^* \zeta_s \wedge \phi_\gamma^* \zeta_s\wedge
        -, i_{\phi_\gamma X_s}i_{\phi_\alpha X_s }\right] \right).
    \end{align*}
    Now, by \eqref{eq:anticommutator} and \eqref{eq:zetaX}
    we get
    \begin{align*}
        \left[ L_\alpha, \Lambda_\beta \right] &=  \sum_{s=1}^n
        \left(
        -\phi_\alpha^*\zeta_s\wedge  i_{\phi_\beta X_s }
        + \zeta_s \wedge i_{\phi_\gamma X_s}
        - i_{X_s}\left( \phi_\gamma^* \zeta_s \wedge - \right)
        + \phi^*_\beta \zeta_s \wedge i_{\phi_\alpha X_s}
        \right)
        \\&=  \sum_{s=1}^n \left(
        \zeta_s \wedge i_{\phi_\gamma X_s} + \phi^*_\gamma \zeta_s
        \wedge i_{X_s} +  \phi_\beta^* \zeta_s \wedge i_{\phi_\alpha X_s}
        - \phi_\alpha^* \zeta_s \wedge i_{\phi_\beta X_s}
        \right) \\ & =  K_{\gamma}.
    \end{align*}
    Equation \eqref{eq:llambdak} is proved as follows. We have
\begin{align*}
        \left[ L_\alpha,\Lambda_{\gamma}  \right] &=
        \sum_{s=1}^n \left(
        \left[ \zeta_s\wedge \phi_\alpha^*\zeta_s\wedge -,
        i_{X_s}i_{\phi_\gamma X_s} \right]
        +\left[ \zeta_s\wedge \phi_\alpha^*\zeta_s\wedge -,
        i_{\phi_\alpha X_s}i_{\phi_{\beta}X_s} \right]\right.
        \\&\phantom{=\sum}\left.-\left[ \phi_\beta^* \zeta_s \wedge \phi_\gamma^* \zeta_s\wedge
        -, i_{X_s}i_{\phi_\gamma X_s} \right]
        -\left[ \phi_\beta^* \zeta_s \wedge \phi_\gamma^* \zeta_s\wedge
        -, i_{\phi_\alpha X_s}i_{\phi_\beta X_s }\right] \right).
    \end{align*}
    Again by \eqref{eq:anticommutator} we get
    \begin{align*}
        \left[ L_\alpha, \Lambda_\gamma \right] &=  \sum_{s=1}^n \left(
        -\phi_\alpha^* \zeta_s \wedge i_{\phi_\gamma X_s}
        - \zeta_s \wedge i_{\phi_\beta X_s}
        - \phi_\beta^* \zeta_s \wedge i_{X_s}
        - i_{\phi_\alpha X_s} \left( \phi_\gamma^* \zeta_s \wedge - \right)
        \right)\\
        &=-  \sum_{s=1}^n \left(
        \zeta_s \wedge i_{\phi_\beta X_s} + \phi_\beta^* \zeta_s \wedge
        i_{X_s} + \phi^*_\alpha \zeta_s \wedge i_{\phi_\gamma X_s} -
        \phi_\gamma^* \zeta_s \wedge i_{\phi_\alpha X_s}\right)\\
        &= -  K_\beta.
    \end{align*}
\end{proof}
\begin{theorem}
    \label{actionofso5}
    The linear span $\mathfrak{g}$ of the  operators $ \left\{\, L_\alpha, \Lambda_\alpha, K_\alpha, H
    \,\middle|\, \alpha = 1,2,3 \right\}$ on $\Omega^*_{H,000}\left( M
    \right)$ is a Lie
    algebra.\end{theorem}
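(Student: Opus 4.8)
The plan is to verify that the span $\mathfrak{g}$ is closed under the commutator bracket by computing $[A,B]$ for every pair $A,B$ among the generators $L_\alpha$, $\Lambda_\alpha$, $K_\alpha$, $H$ and checking that each result again lies in $\mathfrak{g}$. Propositions~\ref{llambdah} and~\ref{llambdak} already supply all brackets of the form $[L_\alpha,\Lambda_\beta]$: one has $[L_\alpha,\Lambda_\alpha]=-H$, $[L_\alpha,\Lambda_\beta]=K_\gamma$ and $[L_\alpha,\Lambda_\gamma]=-K_\beta$ for every cyclic permutation $(\alpha,\beta,\gamma)$ of $(1,2,3)$. Thus the brackets still to be treated are those among the $L$'s, those among the $\Lambda$'s, the mixed brackets of $K$ with $L$ and with $\Lambda$, the brackets among the $K$'s, and the brackets of $H$ with the remaining generators.

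The brackets involving $H$ are immediate from its description as a shift of the degree operator: since $L_\alpha$, $\Lambda_\alpha$ and $K_\alpha$ are homogeneous of degrees $+2$, $-2$ and $0$ respectively, one reads off $[H,L_\alpha]=-2L_\alpha$, $[H,\Lambda_\alpha]=2\Lambda_\alpha$ and $[H,K_\alpha]=0$. The brackets among the $L$'s vanish because each $L_\alpha$ is the wedge product with the $2$-form $\Xi_\alpha$, and wedging with even-degree forms commutes, so $[L_\alpha,L_\beta]=0$. Passing to Hodge duals through $\Lambda_\alpha=*L_\alpha*$ and $*^2=\id$, the identity $\Lambda_\alpha\Lambda_\beta=*L_\alpha L_\beta*=*L_\beta L_\alpha*=\Lambda_\beta\Lambda_\alpha$ then gives $[\Lambda_\alpha,\Lambda_\beta]=0$ as well.

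The heart of the argument is the interaction of the $K_\alpha$ with the other operators, and the observation I would exploit is that $K_\alpha$ is the degree-preserving derivation of the exterior algebra extending the endomorphism $\phi_\alpha^*$ of $1$-forms; that is, $K_\alpha(\mu\wedge\nu)=(K_\alpha\mu)\wedge\nu+\mu\wedge(K_\alpha\nu)$, which is readily checked from its local expression. Two consequences then follow from the quaternionic identities $\phi_\alpha^*\phi_\beta^*=-\phi_\gamma^*$ and $\phi_\beta^*\phi_\alpha^*=\phi_\gamma^*$. First, the assignment sending a linear map to the derivation extending it is a homomorphism of Lie algebras, so $[K_\alpha,K_\beta]=K_{[\phi_\alpha^*,\phi_\beta^*]}=K_{-2\phi_\gamma^*}=-2K_\gamma$, which lies in $\mathfrak{g}$. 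Second, since $K_\beta$ is a wedge-derivation, $[K_\beta,L_\alpha]\omega=(K_\beta\Xi_\alpha)\wedge\omega$; a short calculation with \eqref{eq:Xi} and the above identities gives $K_\alpha\Xi_\alpha=0$ and $K_\beta\Xi_\alpha=\pm2\Xi_\gamma$ for $\beta\neq\alpha$ (the sign depending on whether $(\alpha,\beta,\gamma)$ is cyclic), so that each $[L_\alpha,K_\beta]$ is a scalar multiple of some $L_\gamma$ and lies in $\mathfrak{g}$. The brackets $[\Lambda_\alpha,K_\beta]$ are obtained dually: either by repeating the computation with the interior-product expression \eqref{eq:Lambda}, using that $K_\beta$ also differentiates contractions, or by dualizing the identities just established; in either case the outcome is a multiple of $\Lambda_\gamma$ and hence again lies in $\mathfrak{g}$.

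I expect the main obstacle to be organizational rather than conceptual. The two delicate points are, first, justifying cleanly from the local formula that $K_\alpha$ is a genuine wedge-derivation, so that the homomorphism argument and the Leibniz computation of $K_\beta\Xi_\alpha$ are legitimate; and second, handling the $\Lambda$–$K$ brackets, where one must remember that the Hodge star does not preserve $\Omega^*_{H,000}$ (it interchanges the $000$ and $111$ joint eigenspaces of the $e_\alpha$), so the duality must be applied at the level of operators rather than of the underlying spaces. Once every bracket has been exhibited as a linear combination of $\left\{\, L_\alpha, \Lambda_\alpha, K_\alpha, H \,\middle|\, \alpha=1,2,3 \right\}$, the span $\mathfrak{g}$ is closed under $[\,\cdot\,,\cdot\,]$ and is therefore a Lie algebra, which completes the proof.
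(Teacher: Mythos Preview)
Your argument is correct but follows a genuinely different route from the paper's. The paper never uses the derivation description of $K_\alpha$; instead it computes every bracket involving $K_\alpha$ by the Jacobi identity, writing $K_\alpha = [L_\beta,\Lambda_\gamma]$ from Proposition~\ref{llambdak} and reducing each unknown commutator to ones already known. For example, $[K_\alpha,L_\beta]=-[[L_\gamma,\Lambda_\beta],L_\beta]=-[L_\gamma,[\Lambda_\beta,L_\beta]]=-[L_\gamma,H]=-2L_\gamma$, and a cyclic chain $[K_\alpha,L_\alpha]=-[K_\beta,L_\beta]=[K_\gamma,L_\gamma]=-[K_\alpha,L_\alpha]$ forces $[K_\alpha,L_\alpha]=0$; the $\Lambda$--$K$ and $K$--$K$ brackets are handled the same way. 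Your approach, by contrast, identifies $K_\alpha$ as the degree-zero derivation of the horizontal exterior algebra extending $\phi_\alpha^*$, so that $[K_\alpha,K_\beta]=-2K_\gamma$ follows from the Lie-homomorphism property of derivation extension and $[K_\beta,L_\alpha]\omega=(K_\beta\Xi_\alpha)\wedge\omega$ from Leibniz. Your route is more structural and makes the quaternionic origin of the constants transparent, at the price of having to justify the derivation property and to handle the $\Lambda$--$K$ brackets by a separate duality step; the paper's route is entirely formal, needing nothing beyond Propositions~\ref{llambdah}--\ref{llambdak} and Jacobi, and avoids any discussion of how $*$ interacts with $\Omega^*_{H,000}$.
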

\begin{proof}
    We have to check that $\mathfrak{g}$ is closed under taking commutators.
    Clearly it is enough to check that the commutator of any two operators
    from the set
$ \left\{\, L_\alpha, \Lambda_\alpha, K_\alpha, H
    \,\middle|\, \alpha = 1,2,3 \right\}$   lies in $\mathfrak{g}$.
It is obvious that $\left[ L_\alpha, L_\beta \right]=0$ and $\left[
\Lambda_\alpha, \Lambda_\beta \right] =0$ for any pair $1\le \alpha,
\beta\le 3$. Since $K_\alpha$ does not change the degree of forms,
$L_\alpha$ raises the degree
 by $2$ and $\Lambda_\alpha$ decreases the degree  by
$2$, we get
\begin{align}
\left[ K_\alpha, H \right] &= 0 & \left[ L_\alpha, H \right] & = 2
L_\alpha & \left[ \Lambda_\alpha , H \right] & = - 2 \Lambda_\alpha.
    \label{eq:lh}
\end{align}
Furthermore, by Proposition~\ref{llambdah} we know that $\left[
L_\alpha, \Lambda_\alpha \right] = -H$, and by
Proposition~\ref{llambdak} that $\left[ L_\alpha, \Lambda_\beta
\right] = K_{\gamma}$ for any cyclic permutation
$(\alpha,\beta,\gamma)$ of $(1,2,3)$. Therefore it is left to check
that the commutators $\left[ K_\alpha,L_\alpha \right]$, $\left[
K_\alpha, L_\beta \right]$, $\left[ K_\alpha,\Lambda_\alpha
\right]$, $\left[ K_\alpha,L_\beta \right]$ and $\left[
K_\alpha,K_\beta \right]$ for all pairs $1\le \alpha,\beta\le 3$ lie
in $\mathfrak{g}$.

For any cyclic permutation $(\alpha,\beta,\gamma)$ of $(1,2,3)$ we
have
\begin{align*}
    \left[ K_\alpha, L_\alpha \right]
    &\stackrel{\eqref{eq:llambdak}}{\Relbar\!\Relbar}
    \left[ \left[ L_\beta,\Lambda_\gamma \right], L_\alpha \right] =
    \left[ \left[ L_\beta, L_\alpha \right], \Lambda_\gamma \right] +
    \left[ L_\beta, \left[ \Lambda_\gamma, L_\alpha \right] \right]
    = [L_\beta, K_\beta]\\&\ \ \  =\  - [K_\beta, L_\beta].
\end{align*}
As $(\alpha,\beta,\gamma)$ is an arbitrary cyclic permutation of
$(1,2,3)$ we get also
\begin{align*}
    \left[ K_\beta,L_\beta \right] &= - \left[ K_\gamma, L_\gamma \right] &
    \left[ K_\gamma, L_\gamma \right] &= - \left[ K_\alpha, L_\alpha
    \right]
\end{align*}
and combining we obtain $\left[ K_\alpha,L_\alpha \right]= - \left[
K_\alpha, L_\alpha \right]$, which implies $[K_\alpha, L_\alpha]=0$
for all $1\le \alpha\le 3$. Similarly, we have $\left[
K_\alpha,\Lambda_\alpha \right] = 0$.

Now for any cyclical permutation $(\alpha,\beta,\gamma)$ of
$(1,2,3)$ we have
\begin{align*}
    \left[ K_\alpha, L_\beta \right] &= - \left[\left[L_\gamma , \Lambda_\beta
    \right], L_\beta \right] = - \left[ L_\gamma, \left[ \Lambda_\beta,
    L_\beta \right] \right]= - \left[ L_\gamma, H \right] = - 2 L_\gamma,\\
    \left[ K_\alpha, L_\gamma \right] &= \left[ \left[
    L_\beta,\Lambda_\gamma \right], L_\gamma \right] = \left[ L_\beta,
    \left[ \Lambda_\gamma,L_\gamma \right]
    \right] = \left[ L_\beta, H \right] = 2 L_\beta,\\
    \left[ K_\alpha, \Lambda_\beta \right] &= \left[ \left[ L_\beta,
    \Lambda_\gamma \right], \Lambda_\beta \right] = \left[ \left[
    L_\beta,\Lambda_\beta
    \right], \Lambda_\gamma \right] = \left[-H, \Lambda_\gamma  \right] =-  2 \Lambda_\gamma
    ,\\
    \left[ K_\alpha,\Lambda_\gamma \right] &= - \left[ \left[ L_\gamma,
    \Lambda_\beta
    \right], \Lambda_\gamma \right] = - \left[ \left[ L_\gamma,
    \Lambda_\gamma \right], \Lambda_\beta \right] = \left[H, \Lambda_\beta  \right] = 2\Lambda_\beta,\\
    \left[ K_\alpha,K_\beta \right] &= \left[ \left[ L_\beta,\Lambda_\gamma
    \right], K_\beta \right] = \left[ L_\beta, \left[ \Lambda_\gamma,
    K_\beta \right] \right] = \left[ L_\beta, 2\Lambda_\alpha \right] = -2 K_\gamma.
\end{align*}
\end{proof}
Now we prove that  the Lie algebra $\mathfrak{g}$ can be identified with the Lie algebra
$so\left( 4,1 \right)$. Let us recall the definition of $so\left(
4,1 \right)$. We denote by $E_1$ the matrix $$\mathrm{diag}\left(
1,1,1,1,-1 \right).$$ Then
$$
so\left( 4,1 \right) := \left\{\, A\in M_5\left( \mathbb{R} \right)
\,\middle|\, AE_1 = - E_1 A^t \right\}
$$
as a set. The Lie bracket on $so\left( 4,1 \right)$ is given by the usual
commutator of matrices. We denote by $e_{ij}$ the matrix with $1$ at
the place $\left( i,j \right)$ and zeros elsewhere. Define for $1\le
i<j\le 5$
\begin{align*}
    t_{ij}=
    \begin{cases}
        e_{i5} + e_{5i} & j = 5\\
        e_{ij} - e_{ji} & \mbox{otherwise}.
    \end{cases}
\end{align*}
Then the set $\left\{\, t_{ij} \,\middle|\, 1\le i<j\le 5 \right\}$
is a basis of $so\left( 4,1 \right)$. A direct computation shows that
\begin{align*}
    \left[ t_{ij}, t_{ik} \right] & = -t_{jk} &
    \left[ t_{ij}, t_{jk} \right] & = t_{ik} &
    \left[ t_{ik}, t_{jk} \right] & = -t_{ij} && i<j<k<5\\
    \left[ t_{ij}, t_{i5} \right] & = -t_{j5} &
    \left[ t_{ij}, t_{j5} \right] & = t_{i5} &
    \left[ t_{i5}, t_{j5} \right] & = t_{ij} &&i< j<5
\end{align*}
We will also use $t_{ji}$ to denote $-t_{ij}$ for $1\le i<j\le 4$.
Now for any cyclic permutation $(\alpha,\beta,\gamma)$ of $(1,2,3)$
we have
\begin{align*}
    \left[  t_{\alpha 5}+t_{\alpha 4}  ,
     t_{\alpha 5} - t_{\alpha 4}  \right] &=
     \left[ t_{\alpha 5},- t_{\alpha 4} \right] + \left[ t_{\alpha 4},
     t_{\alpha 5} \right] =  -2 t_{45}
     \\
     \left[ t_{\alpha 5} + t_{\alpha 4}, 2t_{45} \right] &= 2\left(
     t_{\alpha 4} + t_{\alpha 5}
     \right)\\
     \left[ t_{\alpha 5} - t_{\alpha 4}, 2t_{45} \right] & = 2\left(
     t_{\alpha 4} - t_{\alpha 5}
     \right) = -2 \left( t_{\alpha 5} - t_{\alpha 4} \right)\\
     \left[ t_{\alpha 5} + t_{\alpha 4}, t_{\beta 5} + t_{\beta 4} \right]
     &= t_{\alpha\beta} - t_{\alpha \beta} = 0 \\
     \left[ t_{\alpha 5} + t_{\alpha 4}, t_{\beta 5} - t_{\beta 4} \right]
     &= t_{\alpha\beta} + t_{\alpha \beta} = 2 t_{\alpha\beta}\\
     \left[ t_{\alpha 5} + t_{\alpha 4}, t_{\gamma 5} - t_{\gamma 4}  \right]
     &= - 2 t_{\gamma, \alpha}\\
     \left[ 2t_{\beta\gamma}, t_{\beta 5} + t_{\beta 4} \right] &= -
     2\left( t_{\gamma 5 + t_{\gamma 4}} \right)\\
     \left[ 2t_{\beta\gamma}, t_{\gamma 5} + t_{\gamma 4} \right] &=
     2\left( t_{\beta5} + t_{\beta 4} \right)\\
     \left[ 2t_{\beta\gamma}, t_{\beta 5} -t_{\beta 4} \right] &= -2\left(
     t_{\gamma 5 }- t_{\gamma 4} \right)\\
     \left[ 2t_{\beta\gamma} , t_{\gamma 5} - t_{\gamma 4} \right] &= 2
     \left( t_{\beta 5} - t_{\beta 4} \right).
\end{align*}
Therefore the assignment
\begin{align*}
    H &\mapsto 2t_{45} & L_\alpha &\mapsto t_{\alpha 5} + t_{\alpha 4} &
    \Lambda_\alpha &\mapsto t_{\alpha 5} - t_{\alpha 4} & K_\alpha &\mapsto
    2t_{\beta\gamma}
\end{align*}
induces an isomorphism of Lie algebras $so\left(4,1  \right)\to
\mathfrak{g}$.
Thus we  have proved the following result.
\begin{theorem}
    \label{so41action}
    The operators $L_\alpha$, $\Lambda_\alpha$,  $\alpha\in\left\{ 1,2,3 \right\}$, give a
    structure of $so\left( 4,1 \right)$-module on $\Omega^*_{H,000}\left( M
    \right)$.
\end{theorem}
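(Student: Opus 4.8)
The plan is to produce a homomorphism of Lie algebras from the abstract $so(4,1)$ into the associative algebra $\mathrm{End}\left( \Omega^*_{H,000}\left( M \right) \right)$ equipped with the commutator bracket; such a homomorphism is precisely the data of an $so(4,1)$-module structure on $\Omega^*_{H,000}\left( M \right)$. Almost everything needed has already been assembled, so the proof reduces to an identification of structure constants.

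First I would confirm that each of the ten operators $L_\alpha$, $\Lambda_\alpha$, $K_\alpha$ $(\alpha=1,2,3)$ and $H$ restricts to an endomorphism of the graded space $\Omega^*_{H,000}\left( M \right)$. For $L_\alpha$ and $\Lambda_\alpha$ this is Remark~\ref{horizontal}; the operator $H$ acts as the scalar $2n-k$ on the degree-$k$ component; and since $K_\gamma = \left[ L_\alpha, \Lambda_\beta \right]$ for a cyclic permutation $(\alpha,\beta,\gamma)$ by Proposition~\ref{llambdak}, each $K_\gamma$ is a commutator of such endomorphisms and hence preserves $\Omega^*_{H,000}\left( M \right)$ as well. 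Thus the span $\mathfrak{g}$ of these operators lies inside $\mathrm{End}\left( \Omega^*_{H,000}\left( M \right) \right)$, and by Theorem~\ref{actionofso5} it is a Lie subalgebra; the inclusion $\mathfrak{g}\hookrightarrow \mathrm{End}\left( \Omega^*_{H,000}\left( M \right) \right)$ is then automatically a homomorphism of Lie algebras.

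Next I would fix the standard basis $\left\{\, t_{ij} \,\middle|\, 1\le i<j\le 5 \right\}$ of $so(4,1)$ and record its structure constants, then define a linear map $\psi\colon so(4,1)\to \mathfrak{g}$ by sending $2t_{45}\mapsto H$, $t_{\alpha 5}+t_{\alpha 4}\mapsto L_\alpha$, $t_{\alpha 5}-t_{\alpha 4}\mapsto \Lambda_\alpha$, and $2t_{\beta\gamma}\mapsto K_\alpha$ for each cyclic permutation $(\alpha,\beta,\gamma)$ of $(1,2,3)$. Since these target operators span $\mathfrak{g}$ and the source has dimension $\binom{5}{2}=10$, the map $\psi$ is surjective; it is an isomorphism onto $\mathfrak{g}$ once one knows the ten operators are linearly independent, while for the module structure alone one only needs that $\psi$ is a homomorphism. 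The composite $so(4,1)\xrightarrow{\psi}\mathfrak{g}\hookrightarrow \mathrm{End}\left( \Omega^*_{H,000}\left( M \right) \right)$ then supplies the required action.

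The only real content, and the main obstacle, is verifying that $\psi$ respects brackets: for each pair of basis vectors of $so(4,1)$ the image of their Lie bracket must equal the commutator of their images. This is checked by comparing, one relation at a time, the structure constants of $so(4,1)$ in the basis $\left\{ t_{ij} \right\}$ against the operator commutators computed in Theorem~\ref{actionofso5} together with \eqref{eq:lh}; the block of bracket identities displayed just before the statement is exactly this comparison. The verification is finite and elementary but must be carried out exhaustively over all basis pairs, after which the homomorphism property---and hence the $so(4,1)$-module structure on $\Omega^*_{H,000}\left( M \right)$---follows.
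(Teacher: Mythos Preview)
Your proposal is correct and follows essentially the same route as the paper: one verifies that the operators preserve $\Omega^*_{H,000}(M)$ (Remark~\ref{horizontal} and Proposition~\ref{llambdak}), uses Theorem~\ref{actionofso5} to know their span is a Lie algebra, and then matches the commutator relations against the structure constants of $so(4,1)$ in the basis $\{t_{ij}\}$ via the assignment $2t_{45}\leftrightarrow H$, $t_{\alpha 5}\pm t_{\alpha 4}\leftrightarrow L_\alpha,\Lambda_\alpha$, $2t_{\beta\gamma}\leftrightarrow K_\alpha$. Your observation that a homomorphism (rather than an isomorphism) suffices for the module structure is a valid refinement, but otherwise the argument is the paper's own.
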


\section{Action of $\mathbb{H}$ on $\Omega^{2k+1}_{H,000}\left( M
    \right)$ and Betti numbers of compact 3-cosymplectic manifolds}
    \label{quaternions}

Let $U\subset M$ be an open subset and
\begin{align*}
    \left\{ \zeta_1, \phi^*_1\zeta_1, \phi^*_2\zeta_1, \phi^*_3 \zeta_1, \dots, \zeta_n, \phi^*_1 \zeta_n,
    \phi^*_2 \zeta_n, \phi^*_3 \zeta_n, \eta_1, \eta_2, \eta_3 \right\}
\end{align*}
an orthonormal basis of $1$-forms on $U$. Define $\Omega^*_{000}\left( U
\right)$ as a linear span with coefficients in $C^{\infty}\left( U
\right)$ of the set
\begin{align*}
 Y :=   \left\{ \zeta_1, \phi^*_1\zeta_1, \phi^*_2\zeta_1, \phi^*_3 \zeta_1, \dots, \zeta_n, \phi^*_1 \zeta_n,
    \phi^*_2 \zeta_n, \phi^*_3 \zeta_n\right\}.
\end{align*}
Then $\Omega^*_{H,000}\left( U \right)$ is a subspace of $\Omega^*_{000}\left(
U \right)$.
Define the operator $I_\alpha$ on $\Omega^*_{000}\left( U \right)$
extending by linearity the map
\begin{align*}
    \rho_1 \wedge \dots \wedge \rho_k & \mapsto \phi^*_\alpha \rho_1 \wedge \dots
    \wedge \phi^*_\alpha \rho_k & \rho_1, \dots, \rho_k \in Y.
\end{align*}

\begin{proposition}
    The operators $I_\alpha$, $\alpha\in \left\{ 1,2,3 \right\}$, are
    well-defined on $\Omega^*_{000}\left( M \right)$.  Moreover, they
    preserve harmonic forms. In particular, we can consider $I_\alpha$ as an
    endomorphism of $\Omega^*_{H,000}\left( M \right)$.
\end{proposition}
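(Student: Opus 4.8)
The plan is to identify $I_\alpha$ with an intrinsic, frame-independent construction and then to realize it as the exponential of the operator $K_\alpha$, whose invariance on harmonic forms is already in hand.

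First I would dispose of well-definedness. Because $\phi_\alpha$ preserves the horizontal distribution $\mathcal H$ --- by \eqref{3-sasaki1} the form $\eta_\beta\circ\phi_\alpha$ is a combination of the $\eta_\gamma$ and hence vanishes on $\mathcal H$ --- the assignment $\rho\mapsto\phi^*_\alpha\rho=\rho\circ\phi_\alpha$ is a globally defined bundle endomorphism of the horizontal cotangent bundle $\mathcal H^*$, whose sections constitute $\Omega^1_{000}(M)$. The operator $I_\alpha$ applies $\phi^*_\alpha$ to each factor of a decomposable element, so it is exactly the functorial exterior power $\bigwedge^\bullet(\phi^*_\alpha)$, i.e. the unique multiplicative extension of $\phi^*_\alpha|_{\mathcal H^*}$ to $\bigwedge^\bullet\mathcal H^*=\Omega^*_{000}(M)$. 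As the functorial image of a frame-independent bundle map, this extension is automatically independent both of the chosen basis $Y$ and of the way a form is presented as a sum of wedges; this is precisely the content of well-definedness. The relations $(\phi^*_\alpha)^2=-\id$ and $\phi^*_\alpha\phi^*_\beta=-\phi^*_\gamma$ on $\mathcal H^*$ show that $\phi^*_\alpha$ carries $Y$ into its own linear span, so that $I_\alpha$ maps $\Omega^*_{000}$ into itself.

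Next I would handle harmonicity by relating $I_\alpha$ to $K_\alpha$. Each summand $\theta\wedge i_Y$ in the definition of $K_\alpha$ is a degree-preserving (even) derivation of the exterior algebra, so $K_\alpha$ is itself a derivation; evaluating it on the $1$-forms of $Y$ by means of \eqref{eq:zeta} and the quaternionic relations for $\phi^*_\alpha$ shows that $K_\alpha$ restricts on degree one to $\phi^*_\alpha|_{\mathcal H^*}$. Hence $K_\alpha$ is the derivation extension of $\phi^*_\alpha$, and on $\bigwedge^k\mathcal H^*$ it integrates to $\exp(tK_\alpha)=\bigwedge^k(\exp(t\,\phi^*_\alpha))$. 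Since $(\phi^*_\alpha)^2=-\id$ gives $\exp(\tfrac{\pi}{2}\phi^*_\alpha)=\phi^*_\alpha$, I obtain the identity
\[
I_\alpha=\exp\left(\tfrac{\pi}{2}K_\alpha\right)\qquad\text{on }\Omega^*_{000}(M).
\]

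To conclude I would invoke the $so(4,1)$-module structure. By Remark~\ref{horizontal} and Proposition~\ref{llambdak}, $K_\alpha$ is a well-defined endomorphism of the finite-dimensional space $\Omega^*_{H,000}(M)$, lying in the Lie algebra $\mathfrak g$ of Theorem~\ref{actionofso5}; therefore $\exp(\tfrac{\pi}{2}K_\alpha)$ restricts to an endomorphism of $\Omega^*_{H,000}(M)$, and since it equals $I_\alpha$, the operator $I_\alpha$ preserves harmonic forms and descends to the claimed endomorphism of $\Omega^*_{H,000}(M)$. I expect the main obstacle to be the central step, namely verifying cleanly that $K_\alpha$ is precisely the derivation extension of $\phi^*_\alpha$ --- so that the exponential collapses factorwise to $\phi^*_\alpha$; once this identity is secured, harmonicity follows formally from the already established $\mathfrak g$-invariance of $\Omega^*_{H,000}(M)$.
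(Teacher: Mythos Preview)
Your proof is correct and takes a genuinely different route from the paper's.

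The paper introduces auxiliary operators $K_{\alpha,s}$ on $\Omega^k_{000}(U)$ (applying $\phi^*_\alpha$ to $s$ of the $k$ factors, with appropriate signs), observes that $K_{\alpha,1}=K_\alpha$ and $K_{\alpha,k}=(-1)^{\binom{k+1}{2}}I_\alpha$, and then establishes the recursion
\[
K_\alpha K_{\alpha,s}=(s+1)K_{\alpha,s+1}-(k-s+1)K_{\alpha,s-1}.
\]
From this it follows inductively that each $K_{\alpha,s}$, and in particular $I_\alpha$, is a polynomial in $K_\alpha$ with constant coefficients independent of the chart; global well-definedness and preservation of harmonicity are then inherited from $K_\alpha$.

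Your approach replaces this combinatorial recursion by two structural observations: first, that $I_\alpha=\bigwedge^{\bullet}(\phi^*_\alpha|_{\mathcal H^*})$ is the functorial exterior power of a global bundle endomorphism, which settles well-definedness directly; second, that each summand $\theta\wedge i_Y$ is a degree-zero derivation of the exterior algebra, so $K_\alpha$ is the derivation extension of $\phi^*_\alpha|_{\mathcal H^*}$, whence exponentiation of the bundle endomorphism gives $I_\alpha=\exp(\tfrac{\pi}{2}K_\alpha)$. Since $K_\alpha$ already preserves the finite-dimensional space $\Omega^*_{H,000}(M)$ by Proposition~\ref{llambdak} and Remark~\ref{horizontal}, so does its exponential. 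Your argument is shorter and more conceptual, while the paper's recursion has the minor advantage of exhibiting $I_\alpha$ explicitly as a \emph{polynomial} in $K_\alpha$ (so no convergence or functional-calculus considerations arise, even implicitly). Either way, the key input is the same: $I_\alpha$ is a universal function of $K_\alpha$, and $K_\alpha$ lies in the Lie algebra $\mathfrak g$ acting on $\Omega^*_{H,000}(M)$.
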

\begin{proof}
    For $1\le s \le k$, we define the operators $K_{\alpha,s}$ on $\Omega^k_{000}\left( U
    \right)$
    extending by linearity  the map
    \begin{align*}
        \rho_1 \wedge \dots \wedge \rho_k & \mapsto \sum_{1\le j_1<\dots
        <j_s\le k} \left( -1 \right)^{j_1 + \dots + j_s +  s} \rho_1 \wedge
        \dots \wedge \phi^*_\alpha \rho_{j_1} \wedge \dots \wedge
        \phi^*_\alpha \rho_{j_s} \wedge \dots \wedge \rho_k,
    \end{align*}
    where $\rho_1,
        \dots, \rho_k \in Y$.
    We also denote the identity operator by $K_{\alpha,0}$.
    Then $K_{\alpha,1} = K_\alpha$ and $K_{\alpha,k} = \left( -1
    \right)^{\binom{k+1}{2}} I_\alpha$.
    It is easy to check  in  local coordinates that
    \begin{align*}
        K_\alpha K_{\alpha,s} = (s+1) K_{\alpha, s+1} - (k-s+1)
        K_{\alpha, s-1}.
    \end{align*}
    These formulae can be used to show that $K_{\alpha,s}$ is a polynomial
    in $K_\alpha$ with constant coefficients which do not depend on the
    used local chart.  Since $K_\alpha$ are globally defined and
    preserve harmonic forms we get that
    the operators $K_{\alpha,s}$ are globally defined and preserve harmonic
    forms for all $s$. In
    particular, $I_\alpha$ is a well-defined operator on
    $\Omega^*_{000}\left( M \right)$ and preserves harmonic forms.
\end{proof}

It is straightforward to see that the operators $I_\alpha$, $\alpha\in
\{1,2,3\}$, restricted to $\Omega^{{ odd}}_{H,000}\left( M \right)$ satisfy the same
relations as the
imaginary
units of the quaternion algebra $\mathbb{H}$.
Therefore we get
\begin{theorem}
    \label{action_quaternions}
    Let $k$ be odd. Then $\Omega^k_{H,000}\left( M \right)$ is an
    $\mathbb{H}$-module.
\end{theorem}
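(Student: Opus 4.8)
The plan is to show that, in odd degree, the three operators $I_1,I_2,I_3$ satisfy precisely the multiplication rules of the imaginary quaternions, and then to package this into an algebra homomorphism $\mathbb{H}\to\mathrm{End}_{\mathbb{R}}\bigl(\Omega^k_{H,000}(M)\bigr)$. The conceptual starting point is to recognise $I_\alpha$, in each foliated chart, as the $k$-th exterior power of the pointwise linear map $\phi^*_\alpha$ restricted to the span $W$ of $Y$: by its very definition $I_\alpha=\Lambda^k(\phi^*_\alpha|_W)$ on $\Omega^k_{000}$. Since $\Lambda^k$ is a multiplicative functor, the composites and powers of the $I_\alpha$ are governed entirely by the composites and powers of the $\phi^*_\alpha$ on $W$, and the resulting operator identities, being local, then hold globally.

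First I would record the relations satisfied by the $\phi^*_\alpha$ on horizontal $1$-forms. From the defining identity \eqref{3-sasaki} with $\alpha=\beta$ one has $\phi_\alpha^2=-I$ on $\mathcal H$, whence $(\phi^*_\alpha)^2=-\id$ on $W$; and the products $\phi^*_\alpha\phi^*_\beta=-\phi^*_\gamma$, $\phi^*_\beta\phi^*_\alpha=\phi^*_\gamma$ for every cyclic permutation $(\alpha,\beta,\gamma)$ of $(1,2,3)$ are exactly the identities already used just before Proposition~\ref{llambdak}. In particular $\phi^*_\alpha$ maps $W$ into itself, so $I_\alpha$ really does send $\Omega^k_{000}$ to itself.

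Next I would carry these identities through $\Lambda^k$. Using $\Lambda^k(cT)=c^k\Lambda^k(T)$ and $\Lambda^k(ST)=\Lambda^k(S)\Lambda^k(T)$, the relations above yield on $\Omega^k_{000}$, for each cyclic $(\alpha,\beta,\gamma)$,
\begin{align*}
I_\alpha^2=(-1)^k\id,\qquad I_\alpha I_\beta=(-1)^k I_\gamma,\qquad I_\beta I_\alpha=I_\gamma.
\end{align*}
This is the step where the hypothesis is used: for odd $k$ the first relation becomes $I_\alpha^2=-\id$, and the last two force $I_\alpha I_\beta=-I_\beta I_\alpha$, so the three operators square to $-\id$ and pairwise anticommute. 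Setting $\qi:=I_1$, $\qj:=I_2$, $\qk:=I_1 I_2$, a short verification using only these two facts (for instance $\qk^2=I_1 I_2 I_1 I_2=-I_1^2 I_2^2=-\id$ and $\qj\qk=I_2 I_1 I_2=-I_1 I_2^2=I_1=\qi$) shows that $\qi,\qj,\qk$ obey the defining relations of the imaginary units of $\mathbb{H}$.

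It then follows that $\qq\mapsto\id$, $\qi\mapsto I_1$, $\qj\mapsto I_2$, $\qk\mapsto I_1 I_2$ extends to an algebra homomorphism $\mathbb{H}\to\mathrm{End}_{\mathbb{R}}\bigl(\Omega^k_{H,000}(M)\bigr)$; here I would invoke the preceding proposition, which guarantees that each $I_\alpha$ is globally defined and preserves harmonicity, and hence restricts to an endomorphism of $\Omega^k_{H,000}(M)$. This equips $\Omega^k_{H,000}(M)$ with the structure of an $\mathbb{H}$-module for odd $k$. I expect the only genuine difficulty to be the sign bookkeeping, and specifically keeping track of the factor $(-1)^k$ produced by the $k$ wedge factors each transformed by $\phi^*_\alpha$: it is exactly the parity of $k$ that converts the commuting, $+\id$-squaring behaviour of even degree into the anticommuting, $-\id$-squaring quaternionic behaviour in odd degree.
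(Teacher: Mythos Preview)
Your proposal is correct and follows exactly the approach the paper has in mind: the paper simply asserts that it is ``straightforward to see'' that the $I_\alpha$ on odd-degree horizontal harmonic forms satisfy the quaternionic relations, and your argument is a clean unpacking of that verification via the functoriality of $\Lambda^k$ applied to the identities $(\phi^*_\alpha)^2=-\id$ and $\phi^*_\alpha\phi^*_\beta=-\phi^*_\gamma$ on $W$. Your choice $\qk:=I_1I_2$ (rather than $I_3$) correctly absorbs the sign coming from $I_1I_2=(-1)^kI_3=-I_3$.
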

\begin{corollary}
    Let $k$ be odd. Then $b_k^h$ is divisible by $ 4$.
\end{corollary}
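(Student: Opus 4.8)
The plan is to use \autoref{action_quaternions}, which equips the real vector space $\Omega^k_{H,000}\left( M \right)$ with the structure of a module over the quaternion algebra $\mathbb{H}$ whenever $k$ is odd. The strategy is entirely algebraic: once a finite-dimensional real vector space carries a compatible action of $\mathbb{H}$, its real dimension is forced to be a multiple of $4$, and $b_k^h = \dim_{\mathbb{R}} \Omega^k_{H,000}\left( M \right)$ by definition.

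First I would observe that $\Omega^k_{H,000}\left( M \right)$ is finite dimensional over $\mathbb{R}$, being a subspace of the space of harmonic $k$-forms on a compact manifold. Next I would invoke the standard fact that every finite-dimensional module over the (non-commutative) division ring $\mathbb{H}$ is free, hence isomorphic to $\mathbb{H}^m$ for some non-negative integer $m$. This is the quaternionic analogue of the elementary statement that a finitely generated module over a field is free; it holds because $\mathbb{H}$ is a division ring, so any maximal $\mathbb{H}$-linearly independent subset is automatically a basis and one can run the usual dimension argument. Alternatively, one can argue concretely: the operators $I_1$ satisfy $I_1^2 = -\id$, giving $\Omega^k_{H,000}\left( M \right)$ the structure of a complex vector space, so its real dimension is even, say $2d$; then $I_2$ is a complex-antilinear map squaring to $-\id$, that is, a quaternionic structure on this complex space, which forces $d$ to be even, whence the real dimension is divisible by $4$.

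Having fixed $m$ with $\Omega^k_{H,000}\left( M \right) \cong \mathbb{H}^m$ as an $\mathbb{H}$-module, I would conclude $b_k^h = \dim_{\mathbb{R}} \Omega^k_{H,000}\left( M \right) = 4m$, which is the desired divisibility by $4$.

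I do not expect a serious obstacle here, since the hard work — producing the three anticommuting operators $I_\alpha$ obeying the quaternion relations — has already been carried out in \autoref{action_quaternions}. The only point requiring a little care is the justification that the quaternionic module is free, but this is a well-known fact about modules over division rings, so the corollary follows immediately and the proof is short.
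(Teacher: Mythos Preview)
Your proposal is correct and follows essentially the same approach as the paper: the paper also invokes the $\mathbb{H}$-module structure from Theorem~\ref{action_quaternions} and the fact that every finite-dimensional $\mathbb{H}$-module is a direct sum of copies of the regular module (equivalently, is free), so its real dimension is a multiple of $4$. Your additional remarks on finite-dimensionality and the alternative complex-structure-plus-quaternionic-structure argument are fine elaborations but not needed beyond what the paper does.
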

\begin{proof}
    Every finite dimensional module over $\mathbb{H}$ is a direct sum of
    regular modules. As the dimension of the regular module is $ 4$, the
    result follows.
\end{proof}
We denote by  $(d)$ the principal ideal in $\mathbb{Z}$ generated by
$d$. In other words, $(d)$ will be the set of the integers divisible
by $d$.
\begin{corollary}
    Let $M$ be a compact $3$-cosymplectic manifold.
    For any odd $k$ we have
    $$b_{k-1} + b_k \in \left( 4 \right).$$
\end{corollary}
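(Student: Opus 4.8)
The plan is to express $b_{k-1}+b_k$ through the basic Betti numbers by means of \eqref{eq:betti} and then invoke the preceding corollary, which guarantees that $b^h_j$ is divisible by $4$ whenever $j$ is odd. Adopting the convention $b^h_j=0$ for $j<0$, all the identities in \eqref{eq:betti} are uniformly subsumed under
\[
b_j = b^h_j + 3 b^h_{j-1} + 3 b^h_{j-2} + b^h_{j-3},\qquad 0\le j\le 4n+3,
\]
so that the special low-degree cases need no separate treatment.

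First I would add the two expressions for $b_{k-1}$ and $b_k$ and collect the terms according to the index of the basic Betti number. Since the coefficient pattern $(1,3,3,1)$ is $\binom{3}{i}$, adding its shift by one reproduces Pascal's rule $\binom{4}{i}=\binom{3}{i}+\binom{3}{i-1}$, whence
\[
b_{k-1}+b_k = \sum_{i=0}^{4}\binom{4}{i}\, b^h_{k-i}
= b^h_k + 4 b^h_{k-1} + 6 b^h_{k-2} + 4 b^h_{k-3} + b^h_{k-4}.
\]

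The key observation is then that, because $k$ is odd, the indices $k$, $k-2$, $k-4$ are odd while $k-1$, $k-3$ are even; moreover the binomial coefficients attached to the odd indices are exactly $\binom{4}{0}=1$, $\binom{4}{2}=6$, $\binom{4}{4}=1$, whereas those attached to the even indices are $\binom{4}{1}=\binom{4}{3}=4$. The two terms $4b^h_{k-1}$ and $4b^h_{k-3}$ are manifestly multiples of $4$, while each of $b^h_k$, $b^h_{k-2}$, $b^h_{k-4}$ is a multiple of $4$ by the preceding corollary, their indices being odd; in particular $6 b^h_{k-2}\in(4)$. Summing the five contributions yields $b_{k-1}+b_k\in(4)$.

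I do not expect a genuine obstacle here: once \eqref{eq:betti} and the divisibility of the odd basic Betti numbers are in hand, the argument is purely arithmetic. The only point requiring a little care is the bookkeeping at the two ends of the range --- using $b^h_j=0$ for $j<0$ to absorb the degenerate low-degree formulas, and noting that for odd $k>4n+3$ both $b_{k-1}$ and $b_k$ vanish --- but these boundary checks are routine.
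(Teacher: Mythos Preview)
Your proof is correct and follows essentially the same approach as the paper: expand $b_{k-1}+b_k$ via \eqref{eq:betti} to obtain the $(1,4,6,4,1)$ combination of basic Betti numbers, then use that the odd-index $b^h_j$ are divisible by $4$. The only difference is cosmetic---you absorb the low-degree cases by the convention $b^h_j=0$ for $j<0$, whereas the paper treats $k=1$, $k=3$, and $k\ge 5$ separately.
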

\begin{proof}
    Using \eqref{eq:betti} we get
    for $k=1$
    \begin{align*}
    b_0 +   b_1  = b_0^h + b_1^h  + 3b_0^h = b_1^h + 4 b_0^h \in \left( 4
    \right).
    \end{align*}
   Similarly,  for $k=3$ we get
    \begin{align*}
        b_2 + b_3 = b_2^h + 3 b_1^h + 3b_0^h + b_3^h + 3b_2^h + 3b_1^h +
        b_0^h = b_3^h + 4b_2^h + 6 b_1^h + 4 b_0^h \in \left( 4
        \right).
    \end{align*}
    Finally, for odd $k\ge 5$ we have
    \begin{align*}
        b_{k-1} + b_k& = b_{k-1}^h + 3 b_{k-2}^h + 3b_{k-3}^h +
        b_{k-4}^h + b_k^h + 3 b_{k-1}^h + 3b_{k-2}^h + b_{k-3}^h\\ & =
        b_{k}^h + 4 b_{k-1}^h + 6b_{k-2}^h + 4b_{k-3}^h + b_{k-4}^h \in
        \left( 4 \right).
    \end{align*}
\end{proof}

\section{Inequalities on Betti numbers}
\label{inequalities}
In this section we give a bound from below on the Betti numbers of a
compact $3$-cosymplectic manifold. We start with the following
statement about horizontal Betti numbers, which is a generalization
of Wakakuwa's Theorem~9.1 in \cite{wakakuwa}.
\begin{proposition}
    \label{waka}
    Let $M$ be a compact $3$-cosymplectic manifold of dimension $ 4n+3$.
    Then for $0\le k\le n$
    $$
    b_{2k}^h \ge \binom{k+2}{2}.
    $$
\end{proposition}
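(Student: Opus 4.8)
The goal is to bound $b_{2k}^h = \dim \Omega^{2k}_{H,000}(M)$ from below. The key structural input is Theorem~\ref{so41action}, which makes $\Omega^*_{H,000}(M)$ into an $so(4,1)$-module, with $H$ acting on $\Omega^k_{H,000}(M)$ as multiplication by $2n-k$. The plan is to exploit the structure of the operators $L_\alpha$, $\Lambda_\alpha$, $K_\alpha$, and $H$ to produce a large explicit family of nonzero cohomology classes in even degree $2k$. Since $\Omega^0_{H,000}(M)$ is nonzero (it contains the constant function $1$, as the manifold is compact and connected), and the $L_\alpha$ raise degree by $2$, I expect to build the required classes as suitable wedge-monomials in the closed $2$-forms $\Xi_\alpha$ starting from $1$.

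The concrete plan is as follows. First I observe that $\Xi_1, \Xi_2, \Xi_3$ are closed basic $2$-forms representing classes in $\Omega^2_{H,000}(M)$, so the operators $L_\alpha = \Xi_\alpha \wedge -$ send $\Omega^0_{H,000}$ into $\Omega^2_{H,000}$, and more generally iterated applications $L_1^{a} L_2^{b} L_3^{c}$ applied to $1$ land in $\Omega^{2(a+b+c)}_{H,000}(M)$. Restricting to $k = a+b+c$ gives elements of $\Omega^{2k}_{H,000}(M)$, and there are exactly $\binom{k+2}{2}$ monomials of total degree $k$ in three commuting variables. So the heart of the matter is to show that these $\binom{k+2}{2}$ classes $[\Xi_1^{a}\wedge \Xi_2^{b}\wedge \Xi_3^{c}]$ with $a+b+c=k$, $0 \le k \le n$, are linearly independent in $\Omega^{2k}_{H,000}(M)$.

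The main obstacle is precisely this linear independence, and I would establish it by a representation-theoretic or a pairing argument rather than by direct computation. One clean route is to use the $sl(2)$-theory inside $so(4,1)$: for each $\alpha$ the triple $(L_\alpha, \Lambda_\alpha, H)$ satisfies, by Proposition~\ref{llambdah} and \eqref{eq:lh}, the relations $[L_\alpha,\Lambda_\alpha] = -H$, $[H,L_\alpha] = -2L_\alpha$, $[H,\Lambda_\alpha] = 2\Lambda_\alpha$, so it spans a copy of $sl(2,\mathbb{R})$. Since $H$ acts as $2n-k$ on degree $k$, the class $1$ is a highest-weight vector of weight $2n$, and the Lefschetz-type hard-Lefschetz property for this $sl(2)$ then forces the powers $L_\alpha^j 1$ to be nonzero for $0 \le j \le n$; one then leverages the interplay of the three commuting Lefschetz operators. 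Alternatively, and perhaps more transparently, I would argue locally using the explicit expression \eqref{eq:Xi} for $\Xi_\alpha$: compute the wedge-monomials $\Xi_1^a \wedge \Xi_2^b \wedge \Xi_3^c$ at a point in terms of the basis \eqref{eq:basisshort} and show directly that distinct multidegrees $(a,b,c)$ produce linearly independent horizontal forms, using the quaternionic relations $\phi_\alpha^*\phi_\beta^* = -\phi_\gamma^*$ together with the fact that the $\Xi_\alpha$ behave like the three standard Kähler forms of a flat quaternionic vector space.

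I expect the linear-independence step to be where essentially all the work lies; once it is in place, the inequality $b_{2k}^h = \dim\Omega^{2k}_{H,000}(M) \ge \binom{k+2}{2}$ for $0 \le k \le n$ is immediate, since we have exhibited that many independent classes. The restriction $k \le n$ enters exactly because the horizontal distribution $\mathcal{H}$ has rank $4n$, so the top nonvanishing power of each $\Xi_\alpha$ is the $n$-th, and only for $k \le n$ can all $\binom{k+2}{2}$ monomials of degree $k$ survive as independent forms; this is the analogue of the Wakakuwa bound where $n$ is the quaternionic dimension of the transverse hyper-Kähler structure.
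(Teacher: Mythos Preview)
Your proposal is correct and matches the paper's approach: exhibit the $\binom{k+2}{2}$ forms $\Xi_1^{k_1}\wedge\Xi_2^{k_2}\wedge\Xi_3^{k_3}$ with $k_1+k_2+k_3=k$, note they lie in $\Omega^{2k}_{H,000}(M)$, and verify their linear independence locally via the explicit expression~\eqref{eq:Xi}. The paper's concrete device for that last step is a lexicographic leading-term argument on the basis~\eqref{eq:basis}, showing that distinct triples $(k_1,k_2,k_3)$ yield distinct leading monomials; your representation-theoretic alternative is acknowledged by the authors as a valid route, but they opt for the elementary local computation.
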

\begin{proof}
    Recall the definition \eqref{eq:xi} of the $2$-forms $\Xi_\alpha$.
Let us fix $0\le k\le n$. We consider the set
$$
S_k := \left\{\,  \Xi_1^{k_1} \wedge \Xi_2^{k_2}\wedge \Xi_3^{k_3}
\,\middle|\, k_1 + k_2 + k_3  = k\right\}.
$$
All the elements of $S_k$ can be obtained from the constant $0$-form
$1$ on $M$ by successive applications of operators $L_\alpha$,
$\alpha\in \left\{ 1,2,3 \right\}$.  Therefore by
Remark~\ref{horizontal} we get $S_k\subset \Omega^{2k}_{H,000}(M)$.
Thus, to prove the proposition it is enough to show that $S_k$
contains $\binom{k+2}{2}$ linearly independent elements. This can be
checked locally. Let $U$ be a trivializing neighbourhood like in
Section~\ref{action}. We order the elements of the basis
\eqref{eq:basis} of $1$-forms on $U$ by
\begin{multline*}
\zeta_1< \zeta_2 <\dots <\zeta_n < \phi_1^* \zeta_1<\phi_1^* \zeta_2<\dots
<\phi_1^*
    \zeta_n \\< \phi_2^*\zeta_1 < \dots< \phi_2^* \zeta_n< \phi_3^* \zeta_1<\dots<
    \phi_3^*
        \zeta_n< \eta_1< \eta_2<\eta_3.
\end{multline*}
    Then we get an induced lexicographical  ordering on the basis of $\Omega^k\left( U \right)$.
    By using the local expression \eqref{eq:Xi} of $\Xi_\alpha$, $\alpha\in
    \left\{ 1,2,3 \right\}$,
    we see that the first basis element with respect to this ordering that
    enters in
$\Xi_1^{k_1} \wedge \Xi_2^{k_2}\wedge \Xi_3^{k_3}$ with a non-zero
coefficient is
\begin{multline*}
    \zeta_1\wedge \phi_1^*\zeta_1 \wedge \zeta_2 \wedge \phi_1^* \zeta_2
    \wedge \dots \wedge \zeta_{k_1} \wedge
    \phi_1^*\zeta_{k_1}\wedge \zeta_{k_1+1} \wedge \phi_2^*\zeta_{k_1+1}
    \wedge
    \dots \wedge \zeta_{k_1+k_2} \wedge \phi_2^* \zeta_{k_1+k_2}
    \\ \wedge
    \zeta_{k_1+k_2+1} \wedge \phi_3^*\zeta_{k_1+k_2+1} \wedge \dots \wedge
    \zeta_{k_1+k_2+k_3} \wedge \phi_3^* \zeta_{k_1+k_2+k_3}.
\end{multline*}
    Since for different triples $\left( k_1,k_2,k_3 \right)$ such that
    $k_1 + k_2 + k_3=k$ the above basis elements are different,
    we get that $S_k$ contains $\binom{k+2}{2}$ elements and they are linearly
    independent.
\end{proof}
As a consequence we get the following lower bound on the Betti numbers of a
compact $3$-cosymplectic manifold.
\begin{theorem}
    \label{inequality}
    Let $M$
 be a compact $3$-cosymplectic manifold of dimension $ 4n+3$.
    Then for $0\le k\le 2n+1$
$$
    b_{k} \ge \binom{k+2}{2}.
    $$
\end{theorem}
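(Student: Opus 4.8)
The plan is to combine the lower bound on the even basic Betti numbers provided by Proposition~\ref{waka} with the expression \eqref{eq:betti} of the ordinary Betti numbers $b_k$ in terms of the basic ones $b_j^h$. Since every $b_j^h$ is non-negative, I would simply discard from the right-hand side of \eqref{eq:betti} the contributions coming from the odd-degree basic Betti numbers and retain only the two even-degree ones that occur among the indices $k, k-1, k-2, k-3$. Concretely, for $3\le k\le 4n+3$ the relation reads $b_k = b_k^h + 3b_{k-1}^h + 3b_{k-2}^h + b_{k-3}^h$, and exactly two of these four consecutive indices are even, so one never loses the input needed from Proposition~\ref{waka}.

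First I would treat the even case $k=2p$ with $1\le p\le n$. The even indices among $2p,2p-1,2p-2,2p-3$ are $2p$ (with coefficient $1$) and $2p-2$ (with coefficient $3$), so $b_{2p}\ge b_{2p}^h + 3b_{2p-2}^h$. Applying Proposition~\ref{waka}, which is legitimate because $p\le n$, gives $b_{2p}\ge \binom{p+2}{2} + 3\binom{p+1}{2}$. A short computation, factoring out $(p+1)/2$, shows $\binom{p+2}{2} + 3\binom{p+1}{2} = (p+1)(2p+1) = \binom{2p+2}{2}$, which is exactly $\binom{k+2}{2}$. Next, in the odd case $k=2p+1$ with $1\le p\le n$, the even indices are $k-1=2p$ (coefficient $3$) and $k-3=2p-2$ (coefficient $1$), so $b_{2p+1}\ge 3b_{2p}^h + b_{2p-2}^h \ge 3\binom{p+2}{2} + \binom{p+1}{2}$ by Proposition~\ref{waka}; the same factoring yields $3\binom{p+2}{2} + \binom{p+1}{2} = (p+1)(2p+3) = \binom{2p+3}{2} = \binom{k+2}{2}$. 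Both index ranges are respected, because the hypothesis $k\le 2n+1$ forces $p\le n$, so the even index $2p$ fed into Proposition~\ref{waka} never exceeds $2n$.

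Finally I would dispose of the low-degree cases not governed by the four-term formula: $k=0$ gives $b_0 = b_0^h \ge \binom{2}{2}=1$, and $k=1$ gives $b_1 = b_1^h + 3b_0^h \ge 3b_0^h \ge 3 = \binom{3}{2}$, both immediate from $b_0^h\ge 1$ (Proposition~\ref{waka} with $k=0$); the case $k=2$ already fits the even pattern with $p=1$, since in $b_2 = b_2^h + 3b_1^h + 3b_0^h$ the coefficient of $b_0^h$ is again $3$. The whole argument is elementary once Proposition~\ref{waka} is available, and there is no serious obstacle: the only point requiring care — the ``hard part'' such as it is — is the bookkeeping that matches the coefficients $1$ and $3$ appearing in \eqref{eq:betti} to the two binomial terms so that the algebraic identity closes exactly, together with the verification that the even index supplied to Proposition~\ref{waka} always lies in its admissible range $0\le\,\cdot\,\le 2n$.
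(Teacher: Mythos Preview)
Your proof is correct and follows essentially the same route as the paper: drop the odd-index basic Betti numbers from \eqref{eq:betti}, apply Proposition~\ref{waka} to the two surviving even-index terms, and verify the resulting binomial identity. The only cosmetic difference is that the paper folds $k=1$ into the general odd case $k=2l+1$, $0\le l\le n$, by tacitly treating $b^h_{j}=0$ for $j<0$ (and using $\binom{1}{2}=0$), whereas you handle $k=1$ separately.
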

\begin{proof}
    For $k=0$ we have obviously $b_0 = 1 = \binom{2}{2}$.
    First we consider the case $k = 2l$, $1\le l\le n$. Then by
    \eqref{eq:betti} and Proposition~\ref{waka}
    \begin{align*}
        b_k & = b^h_{2l} + 3 b^h_{2l-1} + 3 b^h_{2l-2} + b^h_{2l-3} \ge
        \binom{l+2}{2}+3\cdot 0  +3  \binom{l-1+2}{2} + 0 \\& =
        \frac{\left( l+2 \right)\left( l+1 \right)}{2} +
        3\frac{\left( l+1 \right)l}{2}  = \frac{\left( l+1 \right)\left(
        l+2 + 3l \right)}{2} =  \frac{\left( 2l+2 \right)\left( 2l+1
        \right)}{2}\\& = \binom{k+2}{2}.
    \end{align*}
    Now, suppose that $k= 2l+1$, $0\le l\le n$. Then, again by
    \eqref{eq:betti} and Proposition~\ref{waka}
    \begin{align*}
        b_k & = b^h_{2l+1} + 3b^h_{2l} + 3b^h_{2l-1} + b^h_{2l-2} \ge
        0 + 3\binom{l+2}{2} + 3\cdot 0 + \binom{l-1+2}{2}
        \\& = 3\frac{\left( l+2 \right)\left( l+1 \right)}{2} +
        \frac{\left( l+1 \right)l}{2} = \frac{\left( l+1 \right)\left(
        3l+6 + l \right)}{2} = \frac{\left( 2l+2 \right)\left( 2l+3
        \right)}{2} \\& =\binom{2l+3}{2} =  \binom{k+2}{2}.
    \end{align*}
\end{proof}

\section{Nontrivial examples of compact 3-cosymplectic manifolds}
\label{example} The standard example of a compact 3-cosymplectic
manifold is given by the torus $\mathbb{T}^{4n+3}$ with the
following structure (cf. \cite[page 561]{martincabrera}). Let
$\left\{\theta_{1},\ldots,\theta_{4n+3}\right\}$ be a basis of
$1$-forms such that each $\theta_{i}$ is integral and closed. Let us
define a Riemannian metric $g$ on $\mathbb{T}^{4n+3}$ by
\begin{equation*}
g:=\sum_{i=1}^{4n+3}\theta_{i}\otimes\theta_{i}.
\end{equation*}
For each $\alpha\in\left\{1,2,3\right\}$  we define a tensor field
$\phi_\alpha$ of type $(1,1)$ by
\begin{align*}
\phi_{\alpha}=\sum_{i=1}^{n}& \left( E_{\alpha
n+i}\otimes\theta_{i}-E_{i}\otimes\theta_{\alpha+i}+E_{\gamma
n+i}\otimes\theta_{\beta n +i}\right.\\
&\quad\left.-E_{\beta n +i}\otimes\theta_{\gamma n+i}\right)
+E_{4n+\gamma}\otimes\theta_{4n+\beta}-E_{4n+\beta}\otimes\theta_{4n+\gamma},
\end{align*}
where $\left\{E_{1},\ldots,E_{4n+3}\right\}$ is the dual
(orthonormal) basis of
$\left\{\theta_{1},\ldots,\theta_{4n+3}\right\}$ and
$\left(\alpha,\beta,\gamma\right)$ is a cyclic permutation of
$\left\{1,2,3\right\}$. Setting, for each
$\alpha\in\left\{1,2,3\right\}$, $\xi_{\alpha}:=E_{4n+\alpha}$ and
$\eta_{\alpha}:=\theta_{4n+\alpha}$, one can easily check that the
torus $\mathbb{T}^{4n+3}$ endowed with the structure
$(\phi_{\alpha},\xi_{\alpha},\eta_{\alpha},g)$ is 3-cosymplectic.

On the other hand, the standard example of a noncompact
3-cosymplectic manifold is given by $\mathbb{R}^{4n+3}$ with the
structure described in \cite[Theorem 4.4]{cappellettidenicola}.

Both the above examples are the global product of a hyper-K\"{a}hler
manifold with a $3$-dimensional flat abelian Lie group. In fact,
locally this is always true.

\begin{proposition}\label{localdecomposition}
Any 3-cosymplectic manifold $M^{4n+3}$ is locally the Riemannian
product of a hyper-K\"{a}hler manifold $N^{4n}$ and a
$3$-dimensional flat abelian Lie group $G^{3}$.
\end{proposition}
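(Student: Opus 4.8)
The plan is to apply the local de Rham decomposition theorem to the orthogonal splitting $TM = \mathcal{H} \oplus \mathcal{V}$ and then to identify the two factors. The crucial observation is that both distributions are $\nabla$-parallel. Indeed, since each $\eta_\alpha$ is $\nabla$-parallel, each kernel $\ker \eta_\alpha$ is a parallel distribution, and hence so is $\mathcal{H} = \bigcap_\alpha \ker \eta_\alpha$; equivalently $\mathcal{V} = \langle \xi_1, \xi_2, \xi_3 \rangle$ is spanned by parallel vector fields and is therefore parallel, with $\mathcal{H} = \mathcal{V}^\perp$ parallel because $g$ is parallel. A parallel distribution is automatically involutive (for sections $X, Y$ one has $\nabla_X Y, \nabla_Y X$ again in the distribution, whence $[X,Y]$ lies in it), which re-proves the integrability of both $\mathcal{H}$ and $\mathcal{V}$ already noted. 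By the local de Rham theorem, every point of $M$ has a neighborhood isometric to a Riemannian product $N^{4n} \times G^3$, where $N$ is an integral manifold of $\mathcal{H}$ and $G^3$ an integral manifold of $\mathcal{V}$, each carrying the induced metric.

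Next I would identify the vertical factor $G^3$. Its tangent space is spanned by $\xi_1, \xi_2, \xi_3$, which are parallel, orthonormal, and pairwise commuting by \eqref{commutatore}. Three commuting parallel orthonormal vector fields furnish a parallel frame of the leaf, so its induced connection is flat and the leaf is locally isometric to $\mathbb{R}^3$ with its flat metric; the group structure coming from the commuting flows of the $\xi_\alpha$ then realizes the vertical factor as a three-dimensional flat abelian Lie group $G^3$.

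It remains to show that the horizontal factor $N^{4n}$ is hyper-K\"ahler. On $\mathcal{H}$ each $\eta_\alpha$ vanishes, so \eqref{3-sasaki1} gives $\eta_\beta(\phi_\alpha X) = \sum_\gamma \epsilon_{\beta\alpha\gamma}\eta_\gamma(X) = 0$ for $X \in \mathcal{H}$; hence $\phi_\alpha$ preserves $\mathcal{H}$ and we may set $J_\alpha := \phi_\alpha|_{\mathcal{H}}$. Restricting \eqref{3-sasaki} to $\mathcal{H}$, where the term $\eta_\beta \otimes \xi_\alpha$ drops out, yields $J_\alpha J_\beta = \sum_\gamma \epsilon_{\alpha\beta\gamma} J_\gamma - \delta_{\alpha\beta} I$, so $J_1, J_2, J_3$ satisfy the quaternionic relations, and each is compatible with $g|_{\mathcal{H}}$ by \eqref{compatibile}. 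Because $\mathcal{H}$ is a parallel (hence totally geodesic) factor of a Riemannian product, the Levi-Civita connection $\nabla^N$ of $N$ is the restriction of $\nabla$ to horizontal fields; combined with $\nabla \phi_\alpha = 0$ this gives $\nabla^N J_\alpha = 0$ for each $\alpha$. Thus $(N^{4n}, g|_{\mathcal{H}}, J_1, J_2, J_3)$ is hyper-K\"ahler, completing the identification.

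The substantive input is the $\nabla$-parallelism of the structure tensors, which is exactly what furnishes parallel distributions and lets de Rham decomposition apply; once that is in hand, the remaining work --- checking the quaternionic relations on $\mathcal{H}$ and the flatness of the vertical leaf --- is routine. The point that deserves the most care is the compatibility of connections in the final step: one must invoke that a parallel orthogonal factor of a Riemannian product is totally geodesic, so that $\nabla^N$ really is the restriction of $\nabla$, which is what transfers $\nabla\phi_\alpha = 0$ to $\nabla^N J_\alpha = 0$.
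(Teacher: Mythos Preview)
Your proof is correct and follows essentially the same route as the paper: both apply the local de Rham decomposition to the parallel orthogonal splitting $TM = \mathcal{H}\oplus\mathcal{V}$, then identify the vertical leaf as flat abelian via the parallel commuting frame $\xi_1,\xi_2,\xi_3$ and the horizontal leaf as hyper-K\"ahler. The only minor difference is in the last step: the paper verifies integrability of each $J_\alpha$ via the Nijenhuis tensor (using normality and $d\eta_\alpha=0$) and then invokes metric compatibility, whereas you argue more directly that $\nabla\phi_\alpha=0$ restricts to $\nabla^N J_\alpha=0$ on the totally geodesic leaf---both arguments are valid and equivalent here.
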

\begin{proof}
The tangent bundle of $M^{4n+3}$ splits up as the orthogonal sum of
the vertical distribution $\mathcal{V}$ and the horizontal
distribution $\mathcal{H}$, which define Riemannian foliations with
totally geodesic leaves. Therefore, by the de Rham decomposition
theorem the manifold $M$ is locally the Riemannian product of a leaf
$N^{4n}$ of $\mathcal H$ and a leaf $G^{3}$ of $\mathcal V$. The
structure tensors $\phi_{1}$, $\phi_{2}$, $\phi_{3}$ induce an
almost hyper-complex structure $(J_{1},J_{2},J_{3})$ on $N^{4n}$.
Furthermore, for each $\alpha\in\left\{1,2,3\right\}$ and for all
$X,X'\in\Gamma(TN^{4n})=\Gamma({\mathcal H})$,
\begin{equation*}
[J_{\alpha},J_{\alpha}](X,X')=N_{\phi_\alpha}(X,X')-2d\eta_{\alpha}(X,X')\xi_{\alpha}=0,
\end{equation*}
as $M^{4n+3}$ is normal and $\eta_{\alpha}$ is closed. Consequently,
the structure is hyper-complex. Finally, the induced metric is
clearly compatible with such a hyper-complex structure, so that
$N^{4n}$ is hyper-K\"{a}hler. On the other hand, from Lie group
theory (see e.g. \cite[page 10]{tricerri}) it follows that $G^{3}$
is an abelian Lie group. Since the Reeb vector fields are parallel,
we get
\begin{equation}\label{verticalcurvature}
R(\xi_{\alpha},\xi_{\beta})\xi_{\gamma}=\nabla_{\xi_\alpha}\nabla_{\xi_\beta}\xi_{\gamma}-\nabla_{\xi_\beta}\nabla_{\xi_\alpha}\xi_\gamma-\nabla_{[\xi_\alpha,\xi_\beta]}\xi_{\gamma}=0,
\end{equation}
Therefore $G^{3}$ is flat.
\end{proof}

When $n=0$ of course we have no splitting, and  $M$ is necessarily a
3-torus in the compact case, as it is shown in the following
proposition.

\begin{proposition}
    \label{hopf}
    Suppose $M^3$ is a compact three dimensional 3-cosymplectic manifold. Then
    $M^3$ is a three dimensional torus.
\end{proposition}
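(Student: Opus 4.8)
The plan is to exploit the degeneration of the horizontal distribution when $n=0$: the three Reeb fields then form a parallel global frame, turning $M^3$ into a homogeneous space for an $\mathbb{R}^3$-action whose quotient structure forces a torus.

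First I would record the structural consequence of $n=0$. The horizontal distribution $\mathcal{H}=\bigcap_{\alpha}\ker\eta_\alpha$ has dimension $4n=0$, so $TM=\mathcal{V}=\langle\xi_1,\xi_2,\xi_3\rangle$ and $\{\xi_1,\xi_2,\xi_3\}$ is a global orthonormal frame consisting of $\nabla$-parallel vector fields. By \eqref{commutatore} these fields pairwise commute, and since $M^3$ is compact they are complete. Hence their joint flow defines a smooth action of the abelian Lie group $\mathbb{R}^3$ on $M^3$.

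Next I would establish transitivity and read off the quotient. Since the frame is everywhere linearly independent, the differential at the identity of each orbit map $o_p\colon\mathbb{R}^3\to M^3$, $v\mapsto\Phi_v(p)$, sends the standard basis to $\xi_1(p),\xi_2(p),\xi_3(p)$ and is therefore an isomorphism; by homogeneity $o_p$ is a local diffeomorphism, so every orbit is open. As $M^3$ is connected there is a single orbit and the action is transitive. Because $\mathbb{R}^3$ is abelian all isotropy groups coincide with a fixed subgroup $\Gamma$, which is discrete since the orbit map is a local diffeomorphism (the infinitesimal action is injective). Thus $M^3\cong\mathbb{R}^3/\Gamma$, and compactness of $M^3$ forces $\Gamma$ to be a cocompact discrete subgroup of $\mathbb{R}^3$, hence a full-rank lattice $\Gamma\cong\mathbb{Z}^3$, giving $M^3\cong\mathbb{R}^3/\mathbb{Z}^3=\mathbb{T}^3$. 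Finally, since the frame is orthonormal and parallel the metric is flat and descends to the flat torus, recovering the standard $3$-cosymplectic structure.

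The main obstacle is the topological bookkeeping in the transitivity step: verifying that the orbits are simultaneously open and closed (so a single orbit exhausts the connected $M^3$) and that the common isotropy group is a genuine rank-three cocompact lattice rather than a lower-rank or non-cocompact subgroup. An alternative route would observe that the parallel orthonormal frame makes $M^3$ flat with trivial linear holonomy, whence Bieberbach's theorem on compact flat manifolds identifies $M^3$ with a torus; nevertheless I would prefer the direct $\mathbb{R}^3$-action argument, as it is self-contained and exhibits the flat-torus structure explicitly.
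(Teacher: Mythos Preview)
Your argument is correct and takes a genuinely different route from the paper. You use the three commuting parallel Reeb fields to build a transitive $\mathbb{R}^3$-action, identify the isotropy as a cocompact lattice, and conclude $M^3\cong\mathbb{T}^3$ directly. The paper instead shows $M^3$ is flat (from the parallel frame), orientable (via the volume form $\eta_1\wedge\eta_2\wedge\eta_3$), and has $b_1\ge 3$ (the $\eta_\alpha$ are independent harmonic $1$-forms), then invokes the Hantzsche--Wendt classification of compact orientable Euclidean $3$-manifolds to single out the torus. Your approach is more self-contained and dimension-independent---it would show in any dimension that a compact manifold carrying a parallel orthonormal frame of commuting fields is a torus---whereas the paper's route is shorter to state but relies on an external classification specific to dimension three. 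The Bieberbach alternative you mention sits between the two: it is closer in spirit to the paper's proof (flatness plus a classification result), though the paper actually appeals to the explicit Hantzsche--Wendt list rather than to the holonomy criterion.
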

\begin{proof}
First of all $M^3$ is clearly flat. Indeed, in this case the three
Reeb vector fields span all the vector fields over the ring of
smooth functions. Furthermore, they commute with each other and are
parallel. Thus, similarly to \eqref{verticalcurvature} we get
$R(\xi_{\alpha},\xi_{\beta})\xi_{\gamma}=0$ for any triple of
indices $1\leq\alpha,\beta,\gamma\leq 3$.
 The manifold $M^3$ is orientable, since
$\eta_1\wedge\eta_2\wedge \eta_3\not=0$ is a volume form on $M^3$.
Moreover $\eta_1$, $\eta_2$, $\eta_3$ are three linear independent
harmonic forms of degree $1$, so that $b_1\left( M^3 \right)\ge 3$.
The complete list of  compact orientable Euclidean three-dimensional
manifolds was obtained in Sections 2-3 of~\cite{hantzschewendt}. The
unique manifold with $b_1\ge 3$ in this list is the three
dimensional torus.
\end{proof}

Due to Proposition \ref{localdecomposition}, it is natural to ask
whether there are examples of 3-cosymplectic manifolds which are not
the global product of a hyper-K\"{a}hler manifold with an abelian
Lie group. We will give an example of a compact 3-cosymplectic
manifold in dimension seven that is not a product of a
hyper-K\"{a}hler manifold and a three-dimensional torus. Before
describing the construction, we remind the following well-known
result.
\begin{theorem}
    \label{kodaira}
    If $M^4$ is a compact four-dimensional hyper-K\"{a}hler manifold, then $M^4$ is either a K3 surface or a
    four dimensional torus.
\end{theorem}
\begin{proof}
    From \cite[Theorem 8.1]{wakakuwa} it follows that
    $b_1(M^4)$ is even. Moreover, since every hyper-K\"{a}hler manifold is
    Calabi-Yau, $M^4$ has a trivial
    canonical bundle.  Therefore, by the Kodaira classification
    (cf.~\cite[Section 6A]{kodaira1}) $M^4$ is either a K3 surface or a 4-torus.
\end{proof}

Let $(M^{4n},J_{\alpha},G)$ be a compact hyper-K\"{a}hler manifold,
where $(J_{1},J_{2},J_{3})$ is the hyper-complex structure of
$M^{4n}$ and $G$ is the compatible Riemannian metric. Let
$f:M^{4n}\longrightarrow M^{4n}$  be a hyper-K\"{a}hlerian isometry,
that is $f$ is an isometry such that
\begin{equation}\label{map1}
f_{\ast}\circ J_{\alpha}=J_{\alpha}\circ f_{\ast}
\end{equation}
for each $\alpha\in\left\{1,2,3\right\}$.  Let us define the action
$\varphi$  of $\mathbb{Z}^{3}$ on the product manifold $M^{4n}\times
\mathbb{R}^{3}$ by
\begin{equation*}
\varphi\left(\left(k_{1},k_{2},k_{3}\right),\left(x,t_{1},t_{2},t_{3}\right)\right)=\left(f^{k_1+k_2+k_3}(x),t_{1}+k_{1},t_{2}+k_{2},t_{3}+k_{3}\right).
\end{equation*}
Note that the action $\varphi$ is free and properly discontinuous,
hence the orbit space $M^{4n+3}_f:=(M^{4n}\times
\mathbb{R}^{3})/{\mathbb{Z}^{3}}$ is a smooth manifold. We define a
3-cosymplectic structure on $M^{4n+3}_f$ in the following way. Let
$\hat{\xi}_{1},\hat{\xi}_{2},\hat{\xi}_{3}$ be the vector fields on
$M^{4n}\times{\mathbb{R}}^{3}$ given by
$\hat{\xi}_\alpha:=\frac{\partial}{\partial t_{\alpha}}$, and let
$\hat{\eta}_{1},\hat{\eta}_{2},\hat{\eta}_{3}$ be the $1$-forms
defined by $\hat{\eta}_{\alpha}:=\hat{g}(\cdot,\hat{\xi}_{\alpha})$,
where
\begin{equation*}
\hat{g}=G+dt_{1}\otimes dt_{1}+dt_{2}\otimes dt_{2}+dt_{3}\otimes
dt_{3}.
\end{equation*}
Let $\hat{\phi}_{\alpha}$ be the tensor field of type $(1,1)$ on
$M^{4n}\times \mathbb{R}^{3}$ defined  as follows. Let $E$ be a vector field on
$M$. We can uniquely decompose $E$ into the sum of a vector field $X$ tangent to
$M^{4n}$ and
its vertical part  $\sum_{\beta=1}^{3}\hat{\eta}_{\beta}(E)\hat{\xi}_{\beta}$. Then
we set
\begin{equation*}
\hat{\phi}_{\alpha}E:=J_{\alpha}X+\sum_{\beta,\gamma=1}^{3}\epsilon_{\alpha\beta\gamma}\hat{\eta}_{\beta}(E)\hat{\xi}_{\gamma}.
\end{equation*}

Since $f$ is an isometry, $\hat{g}$ descends to a Riemannian metric
on the quotient manifold $M^{4n+3}_f$. Furthermore, the vector
fields $\hat{\xi}_{1}$, $\hat{\xi}_{2}$, $\hat{\xi}_{3}$, together
with their dual $1$-forms
$\hat{\eta}_{1},\hat{\eta}_{2},\hat{\eta}_{3}$, are clearly
invariant under the action $\varphi$. Finally, because of
\eqref{map1}, also the endomorphisms $\hat{\phi}_{\alpha}$ induce
three endomorphisms on the tangent spaces of $M^{4n+3}_f$. We denote
the induced structure by
$(\phi_{\alpha},\xi_{\alpha},\eta_{\alpha},g)$,
$\alpha\in\left\{1,2,3\right\}$. By a straightforward computation
one can check that
$(M^{4n+3}_f,\phi_{\alpha},\xi_{\alpha},\eta_{\alpha},g)$ is a
3-cosymplectic manifold. Moreover, $M^{4n+3}_f$ is not in general a
global product of a hyper-K\"{a}hler manifold by the torus
$\mathbb{T}^{3}$. To see this we will consider the following more
specific seven-dimensional example.

Let $\mathbb{H}$ be the algebra of quaternions. We consider
$\mathbb{H}$ as a hyper-K\"{a}hler four-dimensional manifold with a
hyper-complex structure given by left multiplication by $\qi$,
$\qj$, $\qk$. Define the action of $\mathbb{Z}^4$ on $\mathbb{H}$ by
\begin{align*}
    \mathbb{Z}^4\times \mathbb{H} &\to \mathbb{H}\\
    \left( \left( a,b,c,d \right), q \right) & \mapsto q+ a+b\qi +
    c\qj+d\qk.
\end{align*}

 By distributivity of multiplication in $\mathbb{H}$ this action
commutes with the left multiplication by $\qi$, $\qj$, and $\qk$. Furthermore, the
Euclidean metric on $\mathbb{H}$ is translation invariant. Thus the
quotient space $\left.\raisebox{0.3ex}{
$\mathbb{H}$}\middle/\!\raisebox{-0.3ex}{ $\mathbb{Z}^4$}\right. $
is diffeomorphic to $\mathbb{T}^4$ and inherits a hyper-K\"{a}hler
structure from $\mathbb{H}$.

Let $\bar{f}\colon \mathbb{H}\to \mathbb{H}$ be the map given by the
right multiplication by $\qi$. Then from associativity of
multiplication on $\mathbb{H}$ it follows that $\bar{f}$ commutes
with the hyper-complex structure maps on $\mathbb{H}$. Moreover,
from the distributivity of multiplication in $\mathbb{H}$ it follows
that $\bar{f}$ induces a hyper-K\"{a}hlerian isometry $f$ on
$\left.\raisebox{0.3ex}{ $\mathbb{H}$}\middle/\!\raisebox{-0.3ex}{
$\mathbb{Z}^4$}\right. $.

\begin{theorem}
    Let $M^{4}= \mathbb{T}^4$ and $f$  be as above. Then $M^{7}_f$ is not the
    total space of a toric bundle over a hyper-K\"ahler four dimensional
    manifold. In particular, $M^7_f$ is not the  global product of a compact hyper-K\"{a}hler
four-manifold and the torus $\mathbb{T}^{3}$.
\end{theorem}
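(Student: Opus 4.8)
The plan is to compute the basic cohomology $H^*_B(M^7_f)$ of the Reeb foliation explicitly and to check that its Betti numbers coincide with those of neither compact four-dimensional hyper-K\"ahler manifold allowed by Theorem~\ref{kodaira}. This suffices for both assertions: if $M^7_f$ were a toric bundle over a hyper-K\"ahler base $N^4$ compatible with the $3$-cosymplectic data, the torus fibres would be exactly the leaves of $\mathcal{F}_3$, whose leaf space would be the manifold $N^4$, and hence $H^*_B(M^7_f)\cong H^*_{dR}(N^4)$. By Theorem~\ref{kodaira}, $N^4$ would be either a K3 surface, with Betti numbers $(1,0,22,0,1)$, or a four-torus, with Betti numbers $(1,4,6,4,1)$, so it would remain only to exhibit a discrepancy.

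First I would determine the basic forms. Working on the covering $(\mathbb{H}/\mathbb{Z}^4)\times\mathbb{R}^3$, on which $\xi_\alpha=\partial/\partial t_\alpha$, a basic form is one with no $dt_\alpha$-component that is annihilated by each $\mathcal{L}_{\xi_\alpha}$; these are precisely the pullbacks of $t$-independent forms on $M^4=\mathbb{T}^4$. To descend to $M^7_f$ such a form must be invariant under the three generators of $\mathbb{Z}^3$, each of which acts on the $M^4$-factor through $f$ (since in each case $k_1+k_2+k_3=1$). Thus $\Omega^*_B(M^7_f)\cong\bigl(\Omega^*(\mathbb{T}^4)\bigr)^{f}$ with $d_B=d$, and because $f$ generates a finite cyclic group, averaging over $\langle f\rangle$ gives
\[
H^*_B(M^7_f)\cong H^*(\mathbb{T}^4)^{f^*}.
\]

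Next I would compute $f^*$ on $H^*(\mathbb{T}^4)$. Writing $q=x_0+x_1\qi+x_2\qj+x_3\qk$, right multiplication by $\qi$ sends $q$ to $-x_1+x_0\qi+x_3\qj-x_2\qk$, so on $H^1(\mathbb{T}^4)$ the induced map $F:=f^*$ satisfies $dx_0\mapsto -dx_1$, $dx_1\mapsto dx_0$, $dx_2\mapsto dx_3$, $dx_3\mapsto -dx_2$. Hence $F^2=-\id$ on $H^1$, so there are no invariant $1$-forms and $b^h_1=0$. On $H^2=\Lambda^2 H^1$ the map $F$ is therefore an involution, and from $\mathrm{tr}(\Lambda^2 F)=\tfrac12\bigl((\mathrm{tr}F)^2-\mathrm{tr}(F^2)\bigr)=\tfrac12\bigl(0-(-4)\bigr)=2$ together with $\dim H^2=6$ I obtain $\dim H^2(\mathbb{T}^4)^{f^*}=\tfrac12(6+2)=4$, that is $b^h_2=4$. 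By Poincar\'e duality $b^h_3=0$ and $b^h_4=1$, while $b^h_0=1$, so the basic Betti numbers of $M^7_f$ are $(1,0,4,0,1)$.

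Comparing with the two admissible profiles, $b^h_1=0\neq 4$ rules out the four-torus and $b^h_2=4\neq 22$ rules out K3, so $H^*_B(M^7_f)$ cannot be the de Rham cohomology of any compact hyper-K\"ahler four-manifold, and the theorem follows. I expect the main obstacle to lie not in the numerics but in the first step: the Reeb foliation of $M^7_f$ is \emph{not} a genuine $\mathbb{T}^3$-bundle, since its leaves are three-tori whose defining lattices jump at the points fixed by powers of $f$ (note $f^2$ is the involution $x\mapsto-x$ on $\mathbb{T}^4$). One therefore cannot simply invoke ``basic cohomology equals base cohomology''; the identification $H^*_B(M^7_f)\cong H^*(\mathbb{T}^4)^{f^*}$ must be justified intrinsically through the averaging argument over $\langle f\rangle$, and one must argue that any hypothetical toric bundle structure compatible with the $3$-cosymplectic structure would nonetheless force $H^*_B(M^7_f)\cong H^*_{dR}(N^4)$.
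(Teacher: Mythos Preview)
Your argument is correct and takes a genuinely different route from the paper's. The paper argues via the \emph{total} cohomology: assuming the Reeb foliation is a regular $\mathbb{T}^3$-bundle over $K^4$, it applies Leray--Hirsch (using that the $\eta_\alpha$ restrict to fibre generators) to conclude $b_2(M^7_f)\in\{21,25\}$, and then builds an explicit CW decomposition of $M^7_f$ with $\binom{7}{2}=21$ two-cells and exhibits one nonzero boundary $\partial_2(\{3,5\})$ to force $b_2<21$. You instead compute the \emph{basic} cohomology directly as $H^*(\mathbb{T}^4)^{f^*}$ and compare the resulting profile $(1,0,4,0,1)$ to the two admissible hyper-K\"ahler profiles. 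Your method is shorter and yields exact numbers (indeed, via \eqref{formulamagica} it gives $b_2(M^7_f)=b^h_2+3b^h_1+3b^h_0=7$, sharpening the paper's bound); the paper's CW computation has the virtue of being purely topological and not relying on identifying basic forms with $\langle f\rangle$-invariant forms on the factor.

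Your closing caveat is over-cautious on one point and apt on the other. The identification $H^*_B(M^7_f)\cong H^*(\mathbb{T}^4)^{f^*}$ does \emph{not} require $\mathcal{F}_3$ to be a bundle: basic forms on $M^7_f$ are exactly the $\mathbb{Z}^3$-invariant horizontal forms on the cover $\mathbb{T}^4\times\mathbb{R}^3$, hence the $\langle f\rangle$-invariant forms on $\mathbb{T}^4$, and finite-group averaging gives the cohomology identification---the jumping leaf lattices you observe are the \emph{conclusion}, not an obstruction to this step. Where care is needed is in your first sentence: both you and the paper read ``toric bundle over a hyper-K\"ahler manifold'' as ``the Reeb foliation is regular'', and under that reading the equality $H^*_B\cong H^*_{dR}(N^4)$ is standard. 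If one wanted the stronger statement that $M^7_f$ admits \emph{no} $\mathbb{T}^3$-bundle structure over a hyper-K\"ahler base, one would need an additional argument (the paper does not supply one either).
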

\begin{proof}
Suppose $M^7_f$ has a regular Reeb foliation, in other words
 $M^7_f$ is the total space of a toric bundle over a hyper-K\"ahler
 manifold $K^4= \left.\raisebox{0.3ex}{ $M^7_f$}\middle/\!\!\!\raisebox{-0.3ex}{
 $\mathcal{F}_3$}\right.$, where $\mathcal{F}_3$ is the Reeb foliation on $M^7_f$.
 Since the harmonic forms $\eta_\alpha$ generate the cohomology groups
of fibres in this bundle, by the Leray-Hirsch theorem
$$
H^*\left( M^7_f \right) \cong H^*\left( \mathbb{T}^3 \right) \otimes H^*\left(
K^4 \right).
$$
By Theorem~\ref{kodaira} there are only two possibilities for $K^4$:
either $K^4\cong \left.\raisebox{0.3ex}{
$\mathbb{H}$}\middle/\!\raisebox{-0.3ex}{ $\mathbb{Z}^4$}\right.$ or
$K^4$ is a complex K3 surface. In the first case the
Hilbert-Poincar\'e series of $H^*\left( \mathbb{T}^3 \right) \otimes
H^*\left( K^4 \right)$ is $(1+t)^7 = 1 + 7t + 21t^2 +\dots$, in the
second case it equals\begin{equation*}
    (1+22t^2 + t^4)(1+t)^3 = 1 + 3t + 25t^2 + \dots
\end{equation*}
We will show in Proposition~\ref{betti} that
 $b_2( M^7_f )< 21$. This will imply a contradiction with our initial assumption.
\end{proof}
To get an estimate on  $b_2( M^7_f )$ we will define a
structure of CW-complex on $M^7_f$.

Recall the definition of CW-complex (cf. \cite[Definition~7.3.1]{maunder}).
We will modify it by replacing the balls in $\mathbb{R}^n$ by cubes $Q^k = \left\{\,
x\in \mathbb{R}^k \,\middle|\,  0\le x_i\le 1,\ i=1,\dots,k \right\}$.
\begin{definition}
    A CW-\emph{complex} is a Hausdorff space $X$, together with an indexing
    set $I_k$ for each integer $k\ge 0$ and maps $\phi^k_\alpha\colon Q^k\to
    X$, $k\ge 0$, $\alpha\in I_k$ such that the following conditions
    are satisfied:
    \begin{enumerate}
\item $X = \bigcup_{k\ge 0} \bigcup_{\alpha\in I_k}
            \phi^k_\alpha( \mathring Q^k )$;
\item $\phi^k_\alpha( \mathring Q^k )\cap \phi^l_\beta( \mathring Q^l
    ) = \emptyset$ unless $k=l$ and $\alpha= \beta$;
\item $\phi^k_\alpha|_{\mathring Q^k}$ is one-to-one;
\item Let $X^k= \bigcup_{j\le k}\bigcup_{\alpha\in I_j}
    \phi^j_\alpha(\mathring Q^j
    )$. Then $\phi^k_\alpha( \partial Q^k )\subset
    X^{k-1}$ for each $k\ge 1$ and $\alpha\in I_k$.
\item A subset $Z$ of $X$ is closed if and only if
    $\left(\phi^k_\alpha\right)^{-1}\left( Z \right)$ is closed in $Q^k$ for each
    $k\ge 0 $ and $\alpha\in I_k$.
\item For each $k\ge 0$ and $\alpha \in I_k$ the set $\phi^k_\alpha( Q^k
    )$ is contained in the union of a finite number of sets of the
    form $\phi^l_\beta( \mathring Q^l )$.
    \end{enumerate}
\end{definition}
Let $X$ be a CW-complex. Then we have the induced maps
$$
\overline{\phi^k_\alpha }\colon S^k \cong \left.\raisebox{0.3ex}{
$Q^{k}$}\middle/\!\raisebox{-0.3ex}{ $\partial Q^{k}$}\right. \to
\left.\raisebox{0.3ex}{ $X^{k}$}\middle/\!\raisebox{-0.3ex}{ $X^{k-1}$}\right. .
$$
We will denote the image of this map by $S^k_\alpha$.
By \cite[Example~7.3.15]{maunder} we get a homeomorphism of topological spaces
\begin{align*}
    \overline{\phi^k}= \bigvee_{\alpha\in I_k} \overline{\phi^k_\alpha}\colon
    \bigvee_{\alpha\in I_k} \left.\raisebox{0.3ex}{
    $Q^k$}\middle/\!\raisebox{-0.3ex}{$ \partial Q^k$}\right.  \to
    \bigvee_{\alpha\in I_k} S^k_\alpha = \left.\raisebox{0.3ex}{
$X^k$}\middle/\!\raisebox{-0.3ex}{ $X^{k-1}$}\right. ,
\end{align*}
where $\bigvee_{\alpha\in I_k}S^k_\alpha$ denote the one point union
(see e.g.
\cite{maunder}, page 205).
 We  denote by
$q_\beta$ the map from $\bigvee_{\alpha\in I_k} S^k_\alpha$ to $S^k_\beta$ that
acts as the  identity on $S^k_\beta$ and collapses all the other spheres to the basic point.

Now we explain how the homology groups of a CW-complex can be
computed. We define $C_k\left( X \right)$ to be the free abelian
group generated by $I_k$. For every pair $\alpha\in I_k$ and
$\beta\in I_{k-1}$ we define the map $d_{\alpha,\beta}$ to be the
composition
$$
S^{k-1}\cong \partial Q^k\stackrel{\phi^k_{\alpha}}{\longrightarrow} X^{k-1}
\stackrel{\pi}{\longrightarrow}
\left.\raisebox{0.3ex}{ $X^{k-1}$}\middle/\!\raisebox{-0.3ex}{ $X^{k-2}$}\right.
=\bigvee_{\gamma\in I_k}
S^{k-1}_\gamma
\stackrel{q_\beta}{\longrightarrow}
S_\beta^{k-1}.
$$
We denote by $\left[ d_{\alpha,\beta}
\right]$ the degree of the map $d_{\alpha,\beta}$. Now define the
differential $\partial\colon C_k\left( X \right)\to C_{k-1}\left( X
\right)$ by $\partial\left( \alpha \right) = \sum_{\beta\in I_{k-1}}
\left[ d_{\alpha,\beta} \right]\beta$. It is proved in Chapter 8 of
\cite{maunder} that the homology groups of the complex $\left( C_*\left( X
\right), \partial \right)$ are isomorphic to the integral homology groups of
the space $X$.

Since $\mathbb{R}$ is torsion free and $C_k\left( X \right)$ are free
$\mathbb{Z}$-modules, it follows from the universal coefficient
theorem that
\begin{align*}
    H_k\left( C_*\left( X \right)\otimes_{\mathbb{Z}} \mathbb{R} \right)
    \cong H_k\left( C_*\left( X \right) \right)\otimes_{\mathbb{Z}}
    \mathbb{R}& = H_k^{\mathbb{R}}\left( X \right), & k\ge 0.
\end{align*}
If $X$ is an $m$-dimensional compact Riemannian manifold then we have by
the Poincar\'e duality
\begin{align*}
    H_k^{\mathbb{R}}\left( X \right) & \cong H^{m-k}_{dR}\left( X
    \right) \cong \Omega^{m-k}_{H}\left( X \right), & 0\le k\le m.
\end{align*}

Define $\pi\colon \mathbb{H}\times \mathbb{R}^3\to M^7_f$ to be the composition
$$
\mathbb{H}\times \mathbb{R}^3 \stackrel{\pi_1}{\longrightarrow} \left(
\left.\raisebox{0.3ex}{ $\mathbb{H}$}\middle/\!\raisebox{-0.3ex}{
$\mathbb{Z}^4$}\right.
\right) \times \mathbb{R}^3 \stackrel{\pi_2}{\longrightarrow} M^7_f,
$$
where $\pi_1$ and $\pi_2$ are the natural projections.

Now we describe the cellular structure on $M^7_f$. For every $k\in
\{0,\ldots, 7\}$ we denote by $I_k$ the set of $k$-subsets in
$\left\{ 1,\dots,7 \right\}$. For every $S\in I_k$ and $x\in Q^k$
define $\theta_S\left( x \right)$ to be the element of
$\mathbb{R}^7\equiv \mathbb{H}\times \mathbb{R}^3$, obtained from
$x$ by order preserving placing of coordinates of $x$ into the
places $s\in S$ and putting at all other places $0$. Now we define
$\phi^k_S\colon Q^k\to M^7_f$  to be the composition
$\pi\circ\theta_S$.
\begin{proposition}
    \label{cellular}
    The maps $\left\{\, \phi^k_S \,\middle|\, S\in I_k,\ k=0,\dots,7    \right\}$ give a CW-complex structure on $M^7_f$.
\end{proposition}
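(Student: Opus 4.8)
The plan is to realize $M^7_f$ as the quotient $\mathbb{R}^7/\Gamma$ of the standard cubical tessellation of $\mathbb{R}^7$ by a group $\Gamma$ of symmetries of that tessellation, and then to read off the claimed CW structure as the one induced on the quotient. Writing $\mathbb{R}^7\equiv\mathbb{H}\times\mathbb{R}^3$, let $\Gamma$ be generated by the four translations $T_1,\dots,T_4$ by $\qq,\qi,\qj,\qk$ in the $\mathbb{H}$-factor together with the three maps $g_j\colon(q,t)\mapsto(\bar f(q),t+\hat e_j)$, $j=1,2,3$, where $\bar f$ is right multiplication by $\qi$ and $\hat e_j$ is the $j$-th basis vector of $\mathbb{R}^3$. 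Unwinding the definitions of $\pi_1$ and $\pi_2$, one checks that $\Gamma$ is exactly the group of deck transformations of $\pi$, so that $M^7_f=\mathbb{R}^7/\Gamma$ with $\pi$ the quotient map; this action is free and properly discontinuous since both the lattice $\mathbb{Z}^4$ defining $\mathbb{T}^4$ and the action $\varphi$ are.

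The decisive point is that $\bar f$ is a \emph{signed permutation} of the coordinate axes: in the basis $\{\qq,\qi,\qj,\qk\}$ it sends $\qq\mapsto\qi$, $\qi\mapsto-\qq$, $\qj\mapsto-\qk$, $\qk\mapsto\qj$. Hence each generator of $\Gamma$ is an affine map whose linear part is a signed permutation matrix and whose translation part lies in $\mathbb{Z}^7$; the same is then true of every element of $\Gamma$. Consequently $\Gamma$ is contained in the symmetry group of the standard cubical tessellation $\mathcal{C}$ of $\mathbb{R}^7$, whose cells are the faces of the unit cubes $v+[0,1]^7$, $v\in\mathbb{Z}^7$. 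In particular $\Gamma$ acts cellularly on $\mathcal{C}$, and a free, properly discontinuous, cellular action induces a CW structure on the quotient whose cells are the $\Gamma$-orbits of cells of $\mathcal{C}$.

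Next I would show that the half-open cube $[0,1)^7$ is a fundamental domain for $\Gamma$. Given $(q,t)$, the maps $g_j$ (which act on $t$ by the standard $\mathbb{Z}^3$-translations) bring $t$ into $[0,1)^3$ uniquely, at the cost of applying a fixed power of $\bar f$ to $q$; the translations $T_i$ then bring the resulting $q$ into $[0,1)^4$ uniquely. Thus $\pi$ restricts to a bijection $[0,1)^7\to M^7_f$. Since $[0,1)^7=\bigsqcup_{k}\bigsqcup_{S\in I_k}\theta_S(\mathring Q^k)$ is precisely the partition of the cube into its origin-based open faces, each $\Gamma$-orbit of cells of $\mathcal{C}$ contains exactly one face $\theta_S(\mathring Q^k)$. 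Therefore the cells of $M^7_f$ are indexed by $\{S\in I_k:0\le k\le 7\}$ with characteristic maps $\pi\circ\theta_S=\phi^k_S$, as asserted. Conditions (1)--(3) are immediate from this bijection, and conditions (5)--(6) follow at once from finiteness: the decomposition has finitely many cells and $M^7_f$ is compact Hausdorff, so a set whose preimage under each $\phi^k_S$ is closed is a finite union of compact, hence closed, images.

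The only condition requiring real work is (4), that $\phi^k_S(\partial Q^k)\subset X^{k-1}$. A face of $\theta_S(Q^k)$ on which some coordinate equals $0$ is literally an origin-based face $\theta_{S'}(Q^{k-1})$ with $S'\subsetneq S$, hence lands in $X^{k-1}$; a face on which an $\mathbb{H}$-coordinate equals $1$ is carried to such a face by the translation $T_i^{-1}$. The subtle case is a face on which a $t$-coordinate equals $1$: reducing by the corresponding $g_j^{-1}$ sets that coordinate to $0$ but applies $\bar f^{-1}$ to the $\mathbb{H}$-block. Here the main obstacle is to see that the dimension does not increase, and this is exactly where the signed-permutation property of $\bar f$ is used, since it guarantees that $\bar f^{-1}$ sends a coordinate face of the cube again to a coordinate face of the same dimension, so that after the ensuing reduction modulo $\mathbb{Z}^4$ the image still lies in a union of origin-based faces of dimension at most $k-1$. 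Phrased invariantly, $\Gamma$ preserves $\mathcal{C}$ and therefore carries the $(k-1)$-skeleton of $\mathcal{C}$ into itself; projecting, $\phi^k_S(\partial Q^k)\subset X^{k-1}$, which completes the verification.
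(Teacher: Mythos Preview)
Your argument is correct and follows the same skeleton as the paper's: both show that $\pi$ restricts to a bijection $[0,1)^7\to M^7_f$, read off conditions (1)--(3) from the decomposition of $[0,1)^7$ into its origin-based open faces, and handle (4) via the invariance of the number of integer coordinates under the deck group.  The genuine difference is one of packaging.  You make explicit the structural reason everything works, namely that right multiplication by~$\qi$ is a signed coordinate permutation, so that the full deck group $\Gamma$ sits inside the affine symmetry group of the standard cubical tessellation of~$\mathbb{R}^7$; from this, preservation of skeleta (hence condition~(4)) is immediate.  The paper leaves this implicit, simply asserting that two $\pi$-equivalent points have the same number of integer coordinates ``from the considerations at the beginning of the proof.''  For condition~(5) you give a much shorter argument than the paper's sequence chase: with finitely many cells and $M^7_f$ compact Hausdorff, a set whose preimage in each $Q^k$ is closed is a finite union of compact images, hence closed.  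This is cleaner, though it does rely on compactness of~$M^7_f$, which the paper's argument does not.  Either route is fine here; yours has the advantage of isolating exactly which feature of~$f$ (its being a lattice-preserving signed permutation, not merely a hyper-K\"ahler isometry) makes the cubical CW structure descend.
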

\begin{proof}
    The topological space $M^7_f$ is Hausdorff since it is a manifold.
Now we show that the restriction of $\pi$ to $\left[ 0,1 \right)^7$ is a
bijection. For any $x\in \mathbb{R}$ we denote by $\left\lfloor x\right\rfloor $
the integral part of $x$ and by $\left\{ x \right\}$ the fractional part
$x - \left\lfloor x \right\rfloor$ of $x$.

Let $\left[ \left[ q \right], \vec{x} \right]\in M^7_f$. Then $\left[ \left[
q \right], \vec{x}
\right] = \left[ \left[ q\qi^{-\left(\left\lfloor x_1 \right\rfloor +\left\lfloor x_2
\right\rfloor+ \left\lfloor x_3 \right\rfloor\right)  } \right],
\{x_1\},\{x_2\},\{x_3\}
\right]$ in $M^7_f$ by definition of the action of $\mathbb{Z}^3$ on $\left.\raisebox{0.3ex}{
$\mathbb{H}$}\middle/\!\raisebox{-0.3ex}{ $\mathbb{Z}^4$}\right.\times
\mathbb{R}^3 $. Thus the restriction of $\pi_2$ to $\left.\raisebox{0.3ex}{
$\mathbb{H}$}\middle/\!\raisebox{-0.3ex}{ $\mathbb{Z}^4$}\right. \times
\left[0,1\right)^3$ is a surjection. To see that $\pi_2$ is a bijection we note
that the map
\begin{align*}
    f\colon \left.\raisebox{0.3ex}{ $\mathbb{H}$}\middle/\!\raisebox{-0.3ex}{
    $\mathbb{Z}^4$}\right. \times \mathbb{R}^3 & \to \left.\raisebox{0.3ex}{
    $\mathbb{H}$}\middle/\!\raisebox{-0.3ex}{
    $\mathbb{Z}^4$}\right. \times \mathbb{R}^3\\
    \left( [q],\vec{x} \right) & \mapsto \left( \left[ q\qi^{-\left(\left\lfloor x_1 \right\rfloor +\left\lfloor x_2
    \right\rfloor+\left\lfloor x_3 \right\rfloor\right)  } \right],
    \left\{x_1 \right\}, \left\{ x_2 \right\}, \left\{ x_3 \right\} \right)
\end{align*}
is $\mathbb{Z}^3$-invariant. Since for different points of
$\left.\raisebox{0.3ex}{ $\mathbb{H}$}\middle/\!\raisebox{-0.3ex}{
    $\mathbb{Z}^4$}\right. \times \mathbb{R}^3 $ the values of $f$ are
    obviously different we see that the restriction of $\pi_2$ to
    $\left.\raisebox{0.3ex}{ $\mathbb{H}$}\middle/\!\raisebox{-0.3ex}{
    $\mathbb{Z}^4$}\right. \times \left[ 0,1 \right)^3 $ is injective.
Similarly we can show that the restriction of $\pi_1$ to $\left[ 0,1
\right)^7$ gives a bijection between $\left[ 0,1 \right)^7$ and
$\left.\raisebox{0.3ex}{ $\mathbb{H}$}\middle/\!\raisebox{-0.3ex}{
    $\mathbb{Z}^4$}\right. \times \left[ 0,1 \right)^3 $. Thus we get that the
    restriction of $\pi$ to $\left[ 0,1 \right)^7$ gives a bijection between
    $\left[ 0,1 \right)^7$ and $M^7_f$.

    Now we check that the maps $\phi^k_S$
    satisfy the properties of  CW-structure.
\begin{enumerate}
    \item We have
        \begin{align*}
            \bigcup_{k=0}^7 \bigcup_{S\in I_k} \theta^k_S\left(
            \mathring Q^k
            \right) = \left[ 0,1 \right)^7,
        \end{align*}
        which implies that the similar union with $\phi^k_S$ in place of
        $\theta^k_S$ gives $M^7_f$.
    \item Let $S\in I_k$ and $T\in I_l$. Then the points of $\theta^k_S\left(
        \mathring Q^k \right)$ have non-integer coordinates at
        places $s\in S$ and integer coordinates in all other
        places. Similarly for the points of $\theta^l_T\left(
        \mathring Q^l \right)$. This implies that if $S\not=T$
        then there are no common points in the sets
        $\theta^k_S\left( \mathring Q^k \right)$ and
        $\theta^l_T\left( \mathring Q^l \right)$. As the
        restriction of $\pi$ to $\left[ 0,1 \right)^7$ is a
        bijection the same property holds for $\phi^k_S\left(
        \mathring Q^k \right)$ and $\phi^l_T\left( \mathring Q^l
        \right)$.
    \item As $\theta^k_S\left( \mathring Q^k \right) \subset \left[ 0,1
        \right)^7$ we see that the restriction of $\phi^k_S$ to
        $\mathring Q^k$ is one-to-one, for any $0\le k\le 7$, $S\in
        I_k$.
    \item From the considerations at the beginning of the proof we can see
        that if two points $\left( q,x \right)$, $\left( q',x'
        \right)\in \mathbb{H}\times \mathbb{R}^3$ are representatives
        of the same point in $M^7_f$ then the number of integer
        coordinates in $\left( q,x \right)$ and $\left( q',x' \right)$
        is the same. Now $X^k\subset M^7_f$ can be identified with those
        points $\left[ \left[ q \right], x \right]\in M^7_f$ such that
    $\left( q,x \right)$ has at most $k$ fractional coordinates.
Now every point $\partial Q^k $ contains at least one integral
coordinate. Therefore for $S\in I_k$,  $\theta^k_S\left( \partial
Q^k \right)$ contains at least $7-k+1 = 8-k$ integral coordinates,
or, in other words, at most $k-1$ non-integral coordinates. Thus
$\phi^k_S \left( \partial Q^k \right) = \pi\circ \theta^k_S\left(
\partial Q^k \right)$ is a subset of $X^{k-1}$.
\item  If $Z\in M^7_f$ is closed then for any $0\le k\le 7$ and $S\in I_k$ the sets $\left( \phi^k_S
    \right)^{-1}\left( Z \right)$ are obviously closed, as the maps
    $\phi^k_\alpha$ are continuous. Suppose now that for every $0\le k\le 7$
    and $S\in I_k$ the sets $\left( \phi^k_S \right)^{-1}\left( Z
    \right)$ are closed.
    As $M^7_f$ has the quotient topology under the projection $\pi$, we have
    to show that $\pi^{-1}\left( Z \right)$ is a closed subset in
    $\mathbb{H}\times \mathbb{R}^3$. Let $\left( q_n, \vec{x}_n \right)$ be
    a sequence in $\pi^{-1}\left( Z \right)$ that converges to $\left(
    q,\vec{x}
    \right)\in \mathbb{H}\times \mathbb{R}^3$. We have to show that
    $\left( q,\vec{x} \right)\in \pi^{-1}\left( Z \right)$. Let $i\in
    \left\{ 1,2,3 \right\}$. If  $x^i$ is fractional, then
    starting from some $n$ we have $\left\lfloor  x_n^i
    \right\rfloor = \left\lfloor x^i \right\rfloor$. If $x^i$ is integer
    then for infinitely many $n$ we have $ x_n^i
    <\left\lfloor x^i \right\rfloor$
    or $\left\lfloor x^i \right\rfloor \le  x_n^i$. By
    passing to an appropriate subsequence we can assume that
    for all $n$ either
    $\left\lfloor  x_n^i  \right\rfloor = \left\lfloor
    x^i \right\rfloor -1$
    or
$\left\lfloor  x_n^i  \right\rfloor = \left\lfloor
    x^i \right\rfloor$. We denote the common integer part of $
    x_n^i $ by $\widetilde{x}^i$. Define
    \begin{align*}
        q'_n & = q_n i^{-\widetilde{x}^1 - \widetilde{x}^2 -
        \widetilde{x}^3} & \left( x'_n \right)^i & = x_n^i -
        \widetilde{x}^i\\
        q' & = q i^{-\widetilde{x}^1 - \widetilde{x}^2 -
        \widetilde{x}^3} &
  \left( x' \right)^i & = x^i -
        \widetilde{x}^i.
    \end{align*}
    Then $\left( q'_n, x'_n \right)$ is a sequence of points in
    $\pi^{-1}\left( Z \right)$ that converges to $\left( q',x' \right)$.
    Moreover $\left( q',x' \right)\in \pi^{-1}\left( Z \right)$ if and only
    if $\left( q,x \right)\in \pi^{-1}\left( Z \right)$.  We also have
    $0\le \left( x'_n \right)^i <1$ and  $x^i \in \left[ 0,1 \right]$.

    Now, similarly to the considerations above, by passing to an appropriate
    subsequence we can assume that the integer parts of the coefficients of
    $q'_n$ does not depend on $n$. Denote by  $\widetilde{q}$ the quaternion with coefficients
    equal to the integer parts of $q'_n$. Define
    $q''_n = q'_n - \widetilde{q}$ and $q'' = q' - \widetilde{q}$. Then
    $\left( q''_n ,x'_n \right)\in \pi^{-1}\left( Z \right)$ converges to
    $\left( q'', x' \right)$. Moreover, $\left( q'',x' \right)\in
    \pi^{-1}\left( Z \right)$ if and only if $\left( q',x' \right)\in
    \pi^{-1}\left( Z \right)$ if and only if $\left( q, x \right)\in
    \pi^{-1}\left( Z \right)$.

    Let $S=\left\{ 1,\dots,7 \right\}$. Note that $\theta^7_{S}\colon Q^7\to
    \mathbb{R}^7$ is the identity map on
    $Q^7$. Therefore $\left( \phi^7_S \right)^{-1}\left( Z \right) = Q^7\cap
    \pi^{-1}\left( Z \right)$. Thus the intersection $Q^7\cap \pi^{-1}\left(
    Z \right)$ is closed in $Q^7$ and thus in $\mathbb{R}^7$. Since the
    sequence $\left( q''_n,x''_n \right)$ lies in $Q^7\cap \pi^{-1}\left(
    Z \right)$ we get that also its  limit $\left( q'',x''
    \right)$ is an element of $Q^7\cap \pi^{-1}\left( Z \right) \subset
    \pi^{-1}\left( Z \right)$.

    \item Obvious, as we have only finitely many cells at every dimension.
\end{enumerate}

\end{proof}

With the cellular structure on $M^7_f$ given in Proposition~\ref{cellular} we
get

\begin{proposition}
    \label{betti}
    The degree of the map $d_{ \{3,5\}, \{3\}}$ is $1$.
   Therefore
    $\partial_2\left( \left\{ 3,5 \right\} \right) \not=0$. In particular,
    \begin{align*}
        b_2\left( M^7_f \right)= \dim\left( H^{\mathbb{R}}_2\left( M^7_f
        \right) \right) \le \dim\left( \ker\left( \partial_2
        \right) \right)<21.
  \end{align*}
\end{proposition}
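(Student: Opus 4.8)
The plan is to read off the coefficient of $\{3\}$ in $\partial_2(\{3,5\})$ directly from the CW-structure of Proposition~\ref{cellular}, by computing the degree of $d_{\{3,5\},\{3\}}$. I identify the coordinates $1,2,3,4$ on $\mathbb{R}^7\equiv\mathbb{H}\times\mathbb{R}^3$ with the quaternionic components along $\qq,\qi,\qj,\qk$ and the coordinates $5,6,7$ with $t_1,t_2,t_3$. The $2$-cell indexed by $\{3,5\}$ is then parametrised by $(u,v)\in Q^2$ through $\theta_{\{3,5\}}(u,v)=(u\qj,v,0,0)$, and I must follow the restriction of $\phi^2_{\{3,5\}}=\pi\circ\theta_{\{3,5\}}$ to the four edges of $\partial Q^2$ through the projection $X^1\to X^1/X^0$ and the collapse $q_{\{3\}}$ onto $S^1_{\{3\}}$.

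First I would determine, for each edge, which open $1$-cell of $M^7_f$ contains its image. On the edge $v=0$ the map $\theta_{\{3,5\}}$ gives $(u\qj,0,0,0)=\theta_{\{3\}}(u)$, so this edge maps homeomorphically onto the $1$-cell $\{3\}$. The edge $u=0$ has vanishing quaternionic part and lands in the $1$-cell $\{5\}$. On the edge $u=1$ the third coordinate equals $1$, which is a lattice point for the $\mathbb{Z}^4$-action on $\mathbb{H}$ (translation by $\qj$), whence $[\qj]=[0]$ and this edge too lands in the $1$-cell $\{5\}$. The decisive edge is $v=1$: there $t_1=1$, and reducing by the $\mathbb{Z}^3$-action with $(k_1,k_2,k_3)=(-1,0,0)$ replaces $(u\qj,1,0,0)$ by $(f^{-1}(u\qj),0,0,0)$. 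Since $f$ is right multiplication by $\qi$, one has $f^{-1}(u\qj)=-u\qj\qi=u\qk$, so this edge maps into the $1$-cell $\{4\}$, not $\{3\}$. This twist is precisely the effect of the monodromy $f$, and is where the non-product character of $M^7_f$ enters the computation.

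It follows that after applying $q_{\{3\}}$ the three edges $u=0$, $u=1$, $v=1$ are sent to the base point, while the edge $v=0$ maps homeomorphically onto $S^1_{\{3\}}$. Orienting $S^1\cong\partial Q^2$ as the boundary of the square, the resulting map $\partial Q^2\to S^1_{\{3\}}$ therefore wraps once, so $[d_{\{3,5\},\{3\}}]=1$ (the sign being immaterial for what follows). Consequently the coefficient of $\{3\}$ in $\partial_2(\{3,5\})=\sum_{\beta\in I_1}[d_{\{3,5\},\beta}]\beta$ is nonzero, and in particular $\partial_2(\{3,5\})\neq 0$.

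To finish, I would pass to the Betti bound. The set $I_2$ consists of the $\binom{7}{2}=21$ two-element subsets of $\{1,\dots,7\}$, so $C_2(M^7_f)\otimes\mathbb{R}$ has dimension $21$. As $\partial_2$ is nonzero it has rank at least one, whence $\dim\ker\partial_2\le 21-1=20$. Combined with $b_2(M^7_f)=\dim H_2^{\mathbb{R}}(M^7_f)\le\dim\ker\partial_2$ this yields $b_2(M^7_f)\le 20<21$. The only genuinely delicate step is the edge analysis, and within it the correct bookkeeping of the quaternion product $\qj\qi=-\qk$ together with the sign of $f^{-1}$, which is what identifies the edge $v=1$ with the cell $\{4\}$; the degree and rank computations are then immediate.
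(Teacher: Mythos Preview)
Your proof is correct and follows essentially the same approach as the paper: you analyse the four edges of $\partial Q^2$ under $\phi^2_{\{3,5\}}$, identify which $1$-cell each lands in (using the $\mathbb{Z}^4$-translation for $u=1$ and the $\mathbb{Z}^3$-twist by $f^{-1}$ for $v=1$), and conclude that only the edge $v=0$ survives the collapse $q_{\{3\}}$, giving degree $\pm 1$. Your explicit use of $f^{-1}$ and the careful computation $f^{-1}(u\qj)=u\qj(-\qi)=u\qk$ is in fact cleaner than the paper's notation $x\qj(\qi)$, and your final paragraph spelling out $\dim C_2=\binom{7}{2}=21$ and the rank bound $\dim\ker\partial_2\le 20$ makes explicit what the paper leaves implicit in the statement.
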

\begin{proof}
    Below we identify $\mathbb{R}^7$ with $\mathbb{H}\times
    \mathbb{R}^3$. Note that $X^0$ consists of one point $ \left[ \left[
    \qzero
    \right], 0, 0,0
    \right]$. Therefore $\left.\raisebox{0.3ex}{
    $X^1$}\middle/\!\raisebox{-0.3ex}{ $X^0$}\right.  = X^1$. Now we describe
    the image of $\partial Q^2$ in $X^1$ under $\phi_{ \{3,5\}}$.
     We have
    \begin{align*}
        \partial Q^2& = \left\{\, \left( 0,x \right) \,\middle|\, 0\le
        x\le 1 \right\} \cup
        \left\{\, \left( x,1 \right) \,\middle|\,  0\le x \le 1 \right\}
        \\&\phantom{=} \cup
    \left\{\, \left( 1,x \right) \,\middle|\,  0\le x \le 1 \right\} \cup
    \left\{\,  \left( x,0 \right) \,\middle|\,  0\le x\le 1 \right\}
\end{align*}
in $\mathbb{R}^2$.
Now for all $0\le x \le 1$
\begin{align*}
    \phi^2_{ \{3,5\}} \left( 0,x \right) & = \left[ \left[ \qzero \right],x, 0,
    0\right] = \phi^1_{ \left\{ 5 \right\}}\left( x \right) \in S^1_{ \{5\}} \\
    \phi^2_{ \{3,5\}} \left( x,1 \right) & = \left[ \left[ x\qj \right], 1, 0, 0
    \right] = \left[ \left[ x\qj \left( \qi \right) \right], 0, 0, 0 \right]
    = \left[ \left[ x\qk \right], 0, 0, 0 \right] = \phi^1_{ \left\{ 4
    \right\}}\left( x \right) \in S^1_{ \{4\}}\\
    \phi^2_{ \{3,5\}} \left( 1,x \right) & = \left[ \left[ \qj \right], x, 0,0
    \right] = \left[ \left[ \qzero \right], x,0,0 \right]= \phi^1_{ \left\{
    5 \right\}}\left( x \right) \in S^1_{
    \{5\}}\\
    \phi^2_{ \{3,5\}} \left( x,0 \right) &= \left[ \left[ x\qj \right], 0, 0,0
    \right] = \phi^1_{ \left\{ 3 \right\}}(x) \in S^1_{ \{3\}}.
\end{align*}
Therefore after composing $\phi_{ \{3,5\}}$ with $q_{ \{3\}}$ we get
that
for $0\le x \le 1$
\begin{align*}
    d_{ \{3,5\}\{3\}} \left( 0,x \right) &= d_{ \{3,5\}\{3\}}\left( x,1 \right) =
    d_{ \{3,5\}\{3\}} \left( 1,x  \right) = \left[ [\qzero], 0, 0,0 \right] \in
    S^1_{ \{3\}}\\
    d_{ \{3,5\}\{3\}} \left( x,0 \right) &=   \left[ \left[ x\qj \right], 0, 0,0
    \right] \in S^1_{ \{3\}}.
\end{align*}
Now it is obvious that the degree of $d_{ \{3,5\}, \{3\}}$ is one.
\end{proof}

\small
\bibliography{three-cosymplectic}
\bibliographystyle{amsplain}
\end{document}